\documentclass{amsart}
\usepackage{adjustbox,lipsum}
\usepackage{amsmath}
\usepackage{amssymb}
\usepackage[shortlabels]{enumitem}
\usepackage{tabu}
\usepackage{tikz-cd}
\usetikzlibrary{positioning}
\usepackage{mathrsfs} 
\usepackage{makecell}

\usepackage{comment}
\usepackage[
        colorlinks, citecolor=darkgreen,
        backref,
        pdfauthor={Samir Siksek},
]{hyperref}

\newcommand{\Aff}{\mathbb{A}}

\newcommand{\F}{\mathbb{F}}
\newcommand{\KK}{\mathbb{K}}
\newcommand{\MM}{\mathbb{M}}

\newcommand{\Gm}{\mathbb{G}_{\mathrm{m}}}
\newcommand{\PP}{\mathbb{P}}
\newcommand{\Q}{\mathbb{Q}}

\newcommand{\Z}{\mathbb{Z}}

\newcommand{\cQ}{\mathcal{Q}}

\newcommand{\cC}{\mathscr{C}}

\newcommand{\cY}{\mathscr{Y}}

\newcommand{\fm}{\mathfrak{m}}
\newcommand{\fp}{\mathfrak{p}}

\newcommand{\fH}{\mathcal{H}}

\newcommand{\LL}{\mathbb{L}}

\newcommand{\OO}{\mathcal{O}}

\DeclareMathOperator{\Spec}{Spec}

\DeclareMathOperator{\Stab}{Stab}

\DeclareMathOperator{\genus}{genus}

\DeclareMathOperator{\Gal}{Gal}

\DeclareMathOperator{\BV}{BV}

\DeclareMathOperator{\Red}{Red}

\DeclareMathOperator{\ord}{ord}

\DeclareMathOperator{\Sym}{Sym}

\newtheorem{thm}{Theorem}

\newtheorem{lem}[thm]{Lemma}

\newtheorem{cor}[thm]{Corollary}

\theoremstyle{definition}

\theoremstyle{remark}

\definecolor{darkgreen}{rgb}{0,0.5,0}

\newcommand{\rem}{\textcolor{red}}

\begin{document}

\title{Hilbert's Irreducibility for $\mathbb{G}_m$}

\begin{abstract}
Let $K$ be a number field and $S$ a finite set of non-archimedean
places. Write $\OO_S$ for the ring of $S$-integers of $K$ and
$\OO_S^\times$ for its unit group. Let $\pi : X \rightarrow \PP^1$
	be a morphism of (irreducible) curves defined over $K$, 
	and denote by $\Red(\pi)$
	the set of $\alpha \in \PP^1(K)$
	such that the fibre $\pi^{-1}(\alpha)$
	is reducible (i.e. the Galois action
	on the fibre is intransitive).
	Hilbert's Irreducibility Theorem
	asserts that $\Red(\pi)$ is contained
	in a thin subset of $\PP^1(K)$.
	In this paper we give an explicit
	description of $\OO_S^\times \cap \Red(\pi)$.

As an application we prove the following result inspired
by a classical theorem of P\'{o}lya and Siegel. Let $p_1,\dotsc,p_s$
be rational primes. Let $f \in \Q[x]$. 
	Then the following are equivalent:
\begin{itemize}
\item There are infinitely
many tuples $(e_1,\dotsc,e_s) \in \mathbb{N}^s$ such that the polynomial $f(x)-p_1^{e_1} \cdots p_s^{e_s}$ is reducible.
\item $f=p_1^{a_1} \cdots p_s^{a_s} g^\ell$ (with $\ell$ prime) 
	or $f=-4 p_1^{a_1} \cdots p_s^{a_s} g^4$
for some $g \in \Q[x]$ and some integers
$a_1,\dotsc,a_s$.
\end{itemize}
\end{abstract}

\author{Samir Siksek}

\address{Mathematics Institute\\
    University of Warwick\\
    CV4 7AL \\
    United Kingdom}

\email{s.siksek@warwick.ac.uk}

\author{Michael Stoll}

\address{
Mathematisches Institut\\
Universit\"{a}t Bayreuth\\
95440 Bayreuth, Germany
	}
\email{michael.stoll@uni-bayreuth.de}

\date{\today}
\thanks{
Siksek is supported by the
EPSRC grant \emph{Moduli of Elliptic curves and Classical Diophantine Problems}
(EP/S031537/1). }
\keywords{Hilbert's irreducibility, unit equation, Siegel's theorem}

\makeatletter
\@namedef{subjclassname@2020}{%
  \textup{2020} Mathematics Subject Classification}
\makeatother

\subjclass[2020]{Primary 12E25. Secondary 11G30}

\maketitle

\section{Introduction}

Let $f(x) \in \Z[x]$ be a polynomial
and let
$p_1,p_2,\dotsc,p_s$ be a set of primes. 
A classical theorem of P\'{o}lya \cite{Polya}
and Siegel \cite{Siegel} 
asserts that
the equation 
\[
f(x)=p_1^{e_1} p_2^{e_2} \cdots p_s^{e_s}
\]
has only finitely many solutions with $x \in \Z$
and $e_i \ge 0$, provided $f$ has at least
two distinct roots. An effective proof was given by
Shorey and Tijdeman \cite{ShoreyTijdeman76}.
One way to state this result is to say that
if $f$ has at least two distinct roots then
there are only finitely many $s$-tuples
$(e_1,\dotsc,e_s)$ such that the polynomial
\begin{equation}\label{eqn:diff}
f(x)-p_1^{e_1} p_2^{e_2} \cdots p_s^{e_s}
\end{equation}
has a root. It is then natural to ask
if there are only finitely many tuples $(e_1,\dotsc,e_s)$
such that the polynomial \eqref{eqn:diff}
is reducible. 
This paper is concerned with 
this question, and with certain generalizations
that can be described as \lq Hilbert's
Irreducibility of $\mathbb{G}_m$\rq. The following
is a corollary to our much more general theorem (Theorem~\ref{thm:general}).
Write $\mathbb{N}=\{0,1,2,\dotsc\}$.
\begin{cor}\label{cor:stronger}
Let $f \in \Q[x]$ be a non-constant polynomial, and let $p_1,\dotsc,p_s$
be distinct primes.
The following are equivalent.
\begin{enumerate}[\upshape (i)]
\item There are $a_1,\dotsc,a_s \in \mathbb{N}$ 
	and $g(x) \in \Q[x]$ such that
\begin{equation}\label{eqn:stronger}
	p_1^{-a_1} \cdots p_s^{-a_s} \cdot f(x) \; = \; 
		-4 g(x)^4 \qquad \text{or} \qquad g(x)^\ell
\end{equation}
where, in the second case, $\ell$ is a prime.
\item There are infinitely many tuples $(e_1,\dotsc,e_s) \in \Z^s$
such that the polynomial in~\eqref{eqn:diff} is reducible.
\item There are infinitely many tuples $(e_1,\dotsc,e_s) \in \mathbb{N}^s$
such that the polynomial in~\eqref{eqn:diff} is reducible.
\end{enumerate}
\end{cor}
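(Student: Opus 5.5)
The implications (iii)$\Rightarrow$(ii) are trivial, so I will prove (i)$\Rightarrow$(iii) by explicit construction and (ii)$\Rightarrow$(i) by reducing to Theorem~\ref{thm:general} applied to the cover $\pi=f:\Aff^1\to\PP^1$. Throughout, $K=\Q$, $S=\{p_1,\dotsc,p_s\}$, so $\OO_S^\times=\{\pm p_1^{e_1}\cdots p_s^{e_s}\}$. The key observation is that $f(x)-u$ is reducible over $\Q$ exactly when the fibre $\pi^{-1}(u)$ is reducible (for $u$ a non-critical value this is Galois-intransitivity; the finitely many critical values do not matter). Thus (ii) says precisely that $\Red(\pi)\cap\OO_S^\times$ contains infinitely many positive elements.

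\emph{(i)$\Rightarrow$(iii).} Write $c=p_1^{a_1}\cdots p_s^{a_s}$.

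In the case $f=c\,g^\ell$, choose $e_i=a_i+\ell k_i$ with $k_i\in\mathbb{N}$. Then $f-p^{e}=c\,(g^\ell-h^\ell)$ with $h=\prod p_i^{k_i}$ rational. This is divisible by $g-h$, which is a proper factor whenever $\ell\ge2$ (if $\ell=1$, take $\ell$ prime $\ge2$ as in the statement). There are infinitely many such tuples, provided $\deg f\geq 2$ or the factor $g-h$ is non-unit. I will handle degree issues, e.g.\ $\deg g\ge1$, since $f$ is non-constant.

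In the case $f=-4c\,g^4$, take $e_i=a_i+4k_i$. Then $-(f-p^e)=c\,(4g^4+h^4)$, and the Sophie Germain identity $4g^4+h^4=(2g^2+2gh+h^2)(2g^2-2gh+h^2)$ gives a nontrivial factorization.

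\emph{(ii)$\Rightarrow$(i).} Apply Theorem~\ref{thm:general}, which gives an explicit description of $\OO_S^\times\cap\Red(\pi)$. Presumably it says this set is, up to finitely many exceptions, a finite union of sets of the form $\{\alpha\, u^\ell\}$ or $\{-4\alpha\, u^4\}$ with $u\in\OO_S^\times$, each arising only when $\pi$ factors through a corresponding map $\PP^1\to\PP^1$: either $t\mapsto \alpha t^\ell$ or $t\mapsto -4\alpha t^4$, which is the isogeny-type cover of $\Gm$. Since infinitely many $p^e$ are reducible values, some such family must occur. Hence $f=\alpha\,\phi^\ell\circ g$ (or $-4\alpha g^4$) for a rational function $g$. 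Because $f$ is a polynomial, its only pole is at $\infty$, which forces $g$ to be a polynomial after adjusting $\alpha$ by an $\ell$-th power.

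It remains to show $\alpha$ may be taken to be $p_1^{a_1}\cdots p_s^{a_s}$ with $a_i\ge0$. The reducible values are $\alpha u^\ell$; infinitely many of them are positive $S$-units, so $\alpha\in\OO_S^\times\cdot(\Q^\times)^\ell$. Absorbing $\ell$-th powers into $g$ and shifting exponents by multiples of $\ell$ makes $a_i\ge0$. The sign can be absorbed for odd $\ell$. For $\ell=2$, positivity of $\alpha u^2$ forces $\alpha>0$. In the quartic case, the condition that $-4\alpha u^4$ is positive determines the sign consistently with the form $-4g^4$, after absorbing into $g^4$.

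The main obstacle is matching the precise output of Theorem~\ref{thm:general} to these normal forms. Specifically, one must rule out other covers that factor through $\Gm$-isogenies, for instance compositions with $t\mapsto t^n$ for composite $n$. These are handled by reducing to a prime $\ell\mid n$, and the $-4t^4$ case arises exactly from $n=4$, where $x^4+4y^4$ is reducible. One must also check that the exceptional finite sets and the critical values can be discarded.
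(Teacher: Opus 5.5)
Your (iii)$\Rightarrow$(ii) and (i)$\Rightarrow$(iii) are correct and match the paper's argument: the factor $g-h$ in one case and the Sophie Germain identity in the other. The gap is in (ii)$\Rightarrow$(i). There you argue from a guessed version of Theorem~\ref{thm:general}, in which the reducible $S$-units form families ``$\alpha u^\ell$ or $-4\alpha u^4$'' attached to factorizations of $\pi$ with the same constant $\alpha$. Your sign step depends on that attachment, but the theorem does not provide it. It only asserts that \emph{some} $\gamma\in\Theta$ and $\mu$ exist with $\pi=-4\gamma\mu^4$ or $\pi=\gamma\mu^\ell$. Nothing ties $\gamma$ to the positive values $p_1^{e_1}\cdots p_s^{e_s}$ you started from.

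With your choice $\Theta=\OO_S^\times=\{\pm p_1^{e_1}\cdots p_s^{e_s}\}$, the theorem can return an unusable answer. For $f=-x^4$, $s=1$, $p_1=2$, both $f=(-1)\cdot(x^2)^2$ and $f=-4\cdot 2^{-2}\cdot x^4$ are admissible outputs, and only the second yields (i). So ``positivity of $\alpha u^2$ forces $\alpha>0$'' is not available from the statement. In the quartic case your guessed matching is actually wrong. When $\pi=-4\gamma\mu^4$, the values shown to be reducible are $\gamma\delta^4$ (because $4\mu^4+\delta^4$ factors), not $-4\gamma\delta^4$. So your sign reasoning there points the wrong way, and in any case a sign cannot be absorbed into a fourth power over $\Q$. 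Your worry about composite exponents is also unnecessary, since Capelli's lemma handles it inside the proof of the theorem.

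The missing idea is just the right choice of group. Take $\pi=f:\PP^1\to\PP^1$ and $\Theta=\langle p_1,\dotsc,p_s\rangle\subset\Q^\times$, which is finitely generated, infinite, and consists of positive numbers.
\begin{itemize}
\item Condition (ii) says exactly that $\Theta\cap\Red(\pi)$ is infinite.
\item Theorem~\ref{thm:general} then gives $\gamma=p_1^{e_1}\cdots p_s^{e_s}$ with $e_i\in\Z$, and $\mu\in\Q(x)$, with $f=-4\gamma\mu^4$ or $f=\gamma\mu^\ell$.
\item Since $f$ has no finite poles, $\mu\in\Q[x]$.
\item Choose $a_i\in\mathbb{N}$ with $a_i\equiv e_i\pmod m$, where $m=4$ or $\ell$, and set $g=p_1^{(e_1-a_1)/m}\cdots p_s^{(e_s-a_s)/m}\,\mu$. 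This gives (i), with no sign analysis needed.
\end{itemize}
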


\bigskip

We now describe our results in greater
generality and explain the links to Hilbert's Irreducibility Theorem.
Let $K$ be a number field of degree~$d$. Let $\pi : X \rightarrow \PP^1$
be an \textbf{irreducible cover defined over $K$}; by this
we mean that $X$ is a smooth projective irreducible curve
defined over $K$, and $\pi: X \rightarrow \PP^1$
is a non-constant morphism defined over $K$. Note
that we do not insist on $X$ being absolutely irreducible.
If $X$ is absolutely irreducible then we say that
$\pi$ is an \textbf{absolutely irreducible cover}.
Write $\BV(\pi) \subset \PP^1(\overline{K})$ for the set of branch values
 of $\pi$;
this is the set of points $\alpha \in \PP^1(\overline{K})$ above which there is a point on $X$
ramified in $\pi$. For $\alpha \in \PP^1(K) \setminus \BV(\pi)$,
the fibre $\pi^{-1}(\alpha)$ consists of $d$ distinct geometric points,
where $d=\deg(\pi)$. We then
say that the fibre $\pi^{-1}(\alpha)$ is \textbf{irreducible} if and only if $\Gal(\overline{K}/K)$
acts transitively on the geometric points of the fibre $\pi^{-1}(\alpha)$, and otherwise say it is 
\textbf{reducible}.
We write
\[
\Red(\pi)=\{\alpha \in \PP^1(K) \; : \; 
\text{$\alpha \in \BV(\pi)$ or $\pi^{-1}(\alpha)$ is reducible}\}.
\]
Hilbert's Irreducibility Theorem (see, e.g.,~\cite{SerreMW})
asserts  
the existence of a thin set $B \subseteq \PP^1(K)$ containing $\Red(\pi)$.
We recall the definition of
thin sets in Section~\ref{sec:thin}, but for now 
we content ourselves with recalling the following quantitative
results; see~\cite[page 178]{SerreMW}. Let $B \subseteq \PP^1(K)$
be a thin set. Let $H(\alpha)$ denote the multiplicative height of
$\alpha \in \PP^1(K)$. Then, as $N \rightarrow \infty$,
\[
\# \{ \alpha \in B \; : \; H(\alpha) \le N\}=O(N^{3d/2} (\log{N})^\gamma),
\]
for some $\gamma<1$,
whereas
\[
\# \{ \alpha \in \PP^1(K) \; : \; H(\alpha) \le N\} \sim
c  N^{2d}
\]
for some $c>0$. Thus, in a precise sense,
relatively few $\alpha \in \PP^1(K)$ belong to $\Red(\pi)$.

We can also consider $\alpha \in \OO_K$ such that $\pi^{-1}(\alpha)$
is reducible.
If $B \subseteq \PP^1(K)$
is a thin set then, as $N \rightarrow \infty$,
\[
\# \{ \alpha \in B \cap \OO_K \; : \; H(\alpha) \le N\}
=O(N^{d/2} (\log{N})^\gamma),
\]
for some $\gamma<1$, whereas
\[
\# \{\alpha \in \OO_K \; : \; H(\alpha) \le N\} \sim c^\prime \cdot N^d,
\]
where $c^\prime>0$. Therefore again,
in a precise sense, relatively few $\alpha \in \OO_K$
belong to $\Red(\pi) \cap \OO_K$.

As motivation for this paper, we note
that $\PP^1(\OO_K)=\PP^1(K)$, as $\PP^1$ is proper,
and $\Aff^1(\OO_K)=\OO_K$, where of course $\Aff^1=\PP^1 \setminus \{\infty\}$.
Thus, Hilbert's Irreducibility Theorem asserts the irreducibility
of $\pi^{-1}(\alpha)$ for most integral points $\alpha$
on $\PP^1$, and also for most integral points $\alpha$
on $\Aff^1$.
In this paper, we give a version of Hilbert's Irreducibility
Theorem with $\alpha$  
running through the integral points of \hbox{$\Gm=\PP^1 \setminus  \{0,\infty\}$}.

We note that $\Gm(\OO_K)=\OO_K^\times$ is the unit group 
of $K$.
More generally, let $S$ be a finite set
of non-archimedean places of $K$, and write
$\OO_S$ for the ring of $S$-integers of $K$. 
Then $\Gm(\OO_S)=\OO_S^\times$ is the $S$-unit
group of $K$. We recall that $\OO_S^\times$
is finitely generated and, by Dirichlet's $S$-Unit Theorem,
has rank
$r_1+r_2+\#S -1$, where $(r_1,r_2)$ is the signature of $K$.
Thus $\OO_S^\times$ is infinite, unless $S=\emptyset$ and $K$ is either $\Q$ or an imaginary
quadratic field. 
The obvious generalization
of Hilbert's Irreducibility Theorem to $S$-integral points
on $\Gm$
is false, as illustrated by the following theorem.
\begin{thm}\label{thm:bad}
Let $K$ be a number field and let $S$ be a finite set
of non-archimedean places of $K$. Then the following hold.
\begin{enumerate}[\upshape (i)]
\item $\OO_S^\times$ is contained in a thin subset of $\PP^1(K)$.
\item There is an irreducible cover $\pi : X \rightarrow \PP^1$,
defined over $K$,
such that for every $\alpha \in \OO_S^\times$, the 
fibre $\pi^{-1}(\alpha)$ is reducible.
\end{enumerate}
\end{thm}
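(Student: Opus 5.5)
The idea is that $\OO_S^\times/(\OO_S^\times)^n$ is finite for every $n\ge 2$: since $\OO_S^\times$ is finitely generated, it has a finite set of coset representatives $\beta_1,\dotsc,\beta_m$ modulo $n$-th powers. Fix $n=2$, or any prime $\ell$. Then every $\alpha\in\OO_S^\times$ can be written as $\alpha=\beta_i\gamma^2$ for some $i$ and some $\gamma\in K^\times$.

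For (i), consider for each $i$ the cover $\phi_i:\PP^1\to\PP^1$, $t\mapsto \beta_i t^2$. Each $\phi_i$ has degree $2>1$ and is defined over $K$, with source $\PP^1$ absolutely irreducible. So $\phi_i(\PP^1(K))$ is a thin set of type~2 in the sense recalled in Section~\ref{sec:thin}. Since $\OO_S^\times\subseteq\bigcup_i \phi_i(\PP^1(K))$ and a finite union of thin sets is thin, (i) follows. The only point to check is that the definition of thin allows degree $\ge2$ covers from irreducible curves, which it does.

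For (ii), I want a single irreducible cover whose fibres over all $S$-units are reducible. Let $X$ be the smooth projective model of $\operatorname{Spec}$ of the compositum $L=K(x)\big(\sqrt{x/\beta_1},\dotsc,\sqrt{x/\beta_m}\big)$, equivalently $K(x)(\sqrt{x},\sqrt{\beta_1},\dotsc,\sqrt{\beta_m})$. Let $\pi$ be induced by $K(x)\subset L$. The function field $L$ is a field, so $X$ is irreducible over $K$, though not necessarily absolutely irreducible, since it contains the constants $\sqrt{\beta_i}$. The extension is Galois with group $G$ an elementary abelian $2$-group, so the fibre over an unramified $\alpha\in K$ is irreducible iff the specialisation $K(\sqrt{\alpha},\sqrt{\beta_1},\dotsc,\sqrt{\beta_m})$ has degree $|G|$. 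Over $\alpha=\beta_i\gamma^2$, however, $\sqrt{\alpha/\beta_i}\in K$, so the specialised field has degree at most $|G|/2$, provided $\sqrt{x/\beta_i}\notin K(x)$. This holds because $x/\beta_i$ is not a square in $K(x)$. Hence the Galois action on the fibre is intransitive. The branch values are only $0$ and $\infty$, which are not units, and in any case $\Red(\pi)$ includes branch points.

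A cleaner alternative avoids the nonabsolutely irreducible subtlety. Take $X$ corresponding to $K(x)(\sqrt{x})\otimes$ the constants, or simply take $\pi:\PP^1_L\to\PP^1_K$ with $L=K(\sqrt{\beta_1},\dots,\sqrt{\beta_m})$ composed with squaring. I would present the compositum version. The main obstacle is checking carefully that the fibre is reducible in the paper's sense (intransitivity on $\deg\pi$ geometric points), which reduces to the Galois-specialisation fact that the decomposition group at an unramified $K$-point equals the Galois group of the residue field extension. Here this group lies in the proper subgroup fixing $\sqrt{x/\beta_i}$.
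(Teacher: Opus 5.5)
Your proof is correct. Part (i) is exactly the paper's argument.

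\textbf{Same cover in (ii).} For (ii) you have built the same cover as the paper, though you may not have noticed. By Lemma~\ref{lem:2n}, the paper's function field is $K(t)(\gamma(t))=K(t)(\sqrt{t},\sqrt{a_1},\dotsc,\sqrt{a_n})$. Your $\beta_j$ and the paper's $a_i$ span the same subgroup of $K^\times/(K^\times)^2$ (the image of $\OO_S^\times$), so this field is your $L$. In both cases $X$ is $\PP^1$ over $k=K(\sqrt{\beta_1},\dotsc,\sqrt{\beta_m})$, viewed as a $K$-curve, with $\pi(u)=u^2$.

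\textbf{How reducibility is checked.} The paper takes the explicit primitive element $\gamma(t)=\sum\sqrt{a_it}$ and uses Lemma~\ref{lem:2n} to get its degree $2^{n+1}$ and its conjugates. It then observes that every root of the specialised minimal polynomial $F(\alpha,x)$ lies in a field of degree $2^n$, so $F(\alpha,x)$ factors. You instead count places. This needs no primitive element and no translation from factorisations of $F(\alpha,x)$ to Galois orbits on the fibre. It is really the converse half of Theorem~\ref{thm:Hilbert}, via Lemma~\ref{lem:degree1}, applied to the intermediate curve $x=\beta_i y^2$ and its $K$-point above $\alpha$. In exchange, the paper's route produces an explicit irreducible $F\in K[t,x]$ whose specialisations at all $S$-units factor, which is the classical form of such a counterexample.

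\textbf{Tightening one step.} Your claim that ``the specialised field has degree at most $|G|/2$'' should be justified directly. Write $\alpha=\beta_i c^2$ with $c\in\OO_S^\times$, put $y=\sqrt{x/\beta_i}$, and let $\sigma\in D(P/\alpha)$. Then $\sigma(y)=\pm y$. Since $\sigma$ induces the identity on $K\subseteq K(P)$ and $y$ reduces to $\pm c\neq 0$, the case $\sigma(y)=-y$ is impossible. Hence $D(P/\alpha)$ lies in the index-$2$ subgroup fixing $y$, so it is intransitive. Alternatively, the integral closure of $K[x]$ in $k(u)$ is $k[u]$, and $k[u]/(u^2-\alpha)\cong k\times k$, giving exactly two places above $\alpha$.

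\textbf{The ``cleaner alternative''.} It does not avoid non-absolute irreducibility: $\PP^1$ over $K(\sqrt{\beta_1},\dotsc,\sqrt{\beta_m})$, regarded over $K$, is literally the same curve $X$. Nothing needs avoiding, though. The paper's definition of an irreducible cover allows this, and the paper's own $X$ is not absolutely irreducible either.
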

In spite of Theorem~\ref{thm:bad}, we give the following
result that illuminates the structure of $\Red(\pi) \cap \OO_S^\times$.
\begin{thm}\label{thm:sunitHilbert}
Let $K$ be a number field and let $S$ be a finite set
of non-archimedean places of $K$.
Let $\pi : X \rightarrow \PP^1$ be an irreducible cover
defined over $K$. Then,
there is a finite set $I \subset \OO_S^\times$, and finitely many
	pairs $(\gamma_1,r_1),\dotsc,(\gamma_k,r_k)$ with
	$\gamma_i \in \OO_S^\times$ and $r_i \ge 2$ such that
\begin{equation}\label{eqn:decomp}
	\OO_S^\times \cap \Red(\pi) \; = \; I \cup \bigcup_{i=1}^k \gamma_i \cdot
	(\OO_S^\times)^{r_i}.
\end{equation}
\end{thm}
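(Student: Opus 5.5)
We reduce the reducibility of fibres to the existence of rational points on finitely many auxiliary covers, and then pull those covers back along the multiplication maps on $\Gm$. First, $\Red(\pi)$ can be described via the Galois closure. Let $\tilde X\to\PP^1$ be the Galois closure of $\pi$ over $K$, with group $G$ acting on the $d$ geometric points of a general fibre. The standard argument (as in Serre's proof of Hilbert irreducibility) shows the following. Away from the finitely many branch values, $\alpha\in\PP^1(K)$ lies in $\Red(\pi)$ if and only if $\alpha$ lifts to a $K$-point of one of finitely many quotient covers $\pi_H:Y_H=\tilde X/H\to\PP^1$. Here $H$ runs over maximal subgroups of $G$ acting intransitively; the twists needed when $\tilde X$ is not geometrically irreducible are handled by taking $H$ over subgroups whose image in the constant-field Galois group is full. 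Each $\pi_H$ has degree $\ge2$. The branch values lying in $\OO_S^\times$ go into $I$. So it suffices to describe, for a single cover $\rho:Y\to\PP^1$ of degree $\ge2$, the set $\OO_S^\times\cap\rho(Y(K))$.

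Second, we analyse one such $\rho$. Fix a finite set $\Gamma$ of representatives of $\OO_S^\times/(\OO_S^\times)^n$, where $n$ is a suitable integer, for instance the exponent of the local monodromy at $0,\infty$ or simply a large multiple of $\deg\rho$. For each $\gamma\in\Gamma$, write $\alpha=\gamma\beta^n$ with $\beta\in\OO_S^\times$. Consider the pullback $Y_\gamma$ of $Y$ along $\phi_\gamma:\Gm\to\Gm$, $\beta\mapsto\gamma\beta^n$. Then $\alpha=\gamma\beta^n\in\rho(Y(K))$ if and only if $\beta$ lies under a $K$-point of $Y_\gamma$, and $\beta$ is an $S$-integral point of $\Gm$. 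Decompose $Y_\gamma$, after normalization, into irreducible components over $K$. A component of degree $1$ over $\PP^1$ gives a section, so every $\beta\in\OO_S^\times$ lifts, and the whole coset $\gamma(\OO_S^\times)^n$ lies in $\rho(Y(K))$. For any other component $Z$, with degree $\ge2$, we want only finitely many $S$-unit $\beta$ in $\rho_Z(Z(K))$. If $Z$ has genus $\ge1$, this follows from Faltings. If $Z$ has genus $0$, Siegel's theorem applies once $\rho_Z^{-1}(\{0,\infty\})$ has at least $3$ geometric points. The remaining case, genus $0$ with at most two points above $\{0,\infty\}$, forces $\rho_Z$ to be, up to isomorphism, $z\mapsto cz^m$, unramified away from $0,\infty$. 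It is exactly here that choosing $n$ divisible enough helps: the pullback along $\beta\mapsto\gamma\beta^n$ with $m\mid n$ splits such a Kummer piece geometrically into degree-$1$ pieces. Thus any component still of degree $\ge2$ that is of this Kummer form is a non-split twist over $K$, such as the cover $x\mapsto -4x^4$ situation, or a twist $c z^m$ with $c$ not an $m$-th power times units. I would then check directly that such a twist has no $K$-points over $S$-units, or that its $K$-points over $S$-units form cosets of higher powers, which can be absorbed by refining $n$.

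Third, we assemble the pieces. The set $\OO_S^\times\cap\Red(\pi)$ is the union over the finitely many $H$ and $\gamma\in\Gamma$ of either full cosets $\gamma(\OO_S^\times)^n$ or finite sets. This gives \eqref{eqn:decomp} with $r_i=n\ge2$, and the finitely many exceptional values collected into $I$.

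The main obstacle is the genus-$0$, two-points-at-infinity case in step two. One must choose $n$ uniformly so that every Kummer-type component either becomes a section or has only finitely many relevant points, and one must control field-of-constants issues when components are not geometrically irreducible. In such a component, $K$-points lie only over finitely many $\beta$, because the non-geometrically-irreducible curve has finitely many $K$-points off the ramification. I would first try taking $n$ equal to the exponent of $G$ times the number of roots of unity in $K$.
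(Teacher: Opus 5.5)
Your first step, the reduction to finitely many covers $\varphi_H : X_H\to\PP^1$ of degree $[G:H]\ge 2$, matches the paper's Theorem~\ref{thm:Hilbert}. The second step has a genuine gap, exactly at the point you flag as the ``main obstacle'': the genus-$0$ case with one point over $0$ and one over $\infty$ is never settled. You speak of ``non-split twists'' and suggest they have no $S$-unit points, or give cosets of higher powers that you would absorb by refining $n$. But refining $n$ changes the pullback and its components, and you give no argument that this terminates. So the final assembly (only full cosets $\gamma(\OO_S^\times)^n$ or finite sets) is not justified.

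The missing idea is that this case is not an obstacle at all: it is precisely where the cosets come from, and no pullback is needed. Suppose $\psi: C\to\PP^1$ takes infinitely many $S$-unit values on $C(K)$. Then $C$ is absolutely irreducible. Apply Siegel's theorem to $\psi+\psi^{-1}$, which is $S$-integral wherever $\psi$ is an $S$-unit and has poles exactly at $\psi^{-1}(\{0,\infty\})$. This forces genus $0$, a unique zero $P_0$ and a unique pole $P_\infty$. These points are $K$-rational, being alone in their fibres, so Riemann--Hurwitz gives $\psi=\delta g^r$ with $g:C\to\PP^1$ a $K$-isomorphism (Lemma~\ref{lem:twopoints}). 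Hence $\OO_S^\times\cap\psi(C(K))=\OO_S^\times\cap\delta(K^\times)^r$. Once this contains some $\gamma$, it equals $\gamma(\OO_S^\times)^r$, because an $S$-unit that is an $r$-th power in $K$ is the $r$-th power of an $S$-unit. Applying this to each $X_H$ gives the theorem with $r_i=[G:H_i]\ge 2$ (Lemma~\ref{lem:infintersect}). The exponents $r_i$ may differ, so nothing has to be absorbed into a common $n$.

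If you want to keep your uniform-exponent pullback route, it can be closed, but by a different argument from the one you sketch.
\begin{itemize}
\item Take $n\ge 2$ divisible by every ramification index of $\rho$ above $0$ and $\infty$, for instance a multiple of the exponent of $G$ as in your last guess. ``A large multiple of $\deg\rho$'' is not the right condition.
\item By Abhyankar's lemma, every component of the normalized pullback is then unramified over $\beta=0,\infty$.
\item A geometrically irreducible component of degree $m\ge 2$ therefore has $2m\ge 4$ geometric points over $\{0,\infty\}$, and Siegel's theorem gives finiteness.
\item Components that are not geometrically irreducible have no $K$-points after normalization; the singular points of the fibre product lie over only finitely many $\beta$.
\item Degree-$1$ components give full cosets.
\end{itemize}
So no Kummer components of degree $\ge 2$ survive, and your twist discussion becomes unnecessary.

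Finally, in the genus $\ge 1$ case you need Siegel's theorem on integral points, not Faltings, since Faltings gives nothing in genus $1$.
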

We remark that,
given $\pi$ and $S$, 
the set of pairs $(\gamma_i,r_i)$ is effectively computable.
However, the set $I$ is not at present known to be effectively computable,
and its finiteness follows from Siegel's Theorem on integral points on
hyperbolic curves, which is ineffective.
The following is an immediate corollary to Theorem~\ref{thm:sunitHilbert}.
\begin{cor}
Under the assumptions of Theorem~\ref{thm:sunitHilbert},
suppose there is some $\alpha \in \OO_S^\times$ with $\alpha \notin \Red(\pi)$.
Then there is a finite-index subgroup $V$ of $\OO_S^\times$
such that $\pi^{-1}(\beta)$ is irreducible for all $\beta \in \alpha \cdot V$.
\end{cor}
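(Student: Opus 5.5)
The corollary should follow directly from the decomposition \eqref{eqn:decomp} in Theorem~\ref{thm:sunitHilbert}. Write $\OO_S^\times \cap \Red(\pi) = I \cup \bigcup_{i=1}^k \gamma_i (\OO_S^\times)^{r_i}$. The idea is to choose $V$ small enough that the coset $\alpha V$ avoids every coset $\gamma_i(\OO_S^\times)^{r_i}$, and then to shrink it further so that it also avoids the finite set $I$.

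\emph{Handling the cosets.} Put $r = \prod_i r_i$, or any common multiple of the $r_i$. Then $V_0 = (\OO_S^\times)^{r}$ is a subgroup of $\OO_S^\times$ of finite index, because $\OO_S^\times$ is finitely generated abelian. For each $i$ we have $V_0 \subseteq (\OO_S^\times)^{r_i}$. Hence $\gamma_i (\OO_S^\times)^{r_i}$ is a union of cosets of $V_0$, and $\alpha V_0$ is either contained in it or disjoint from it. Since $\alpha \notin \Red(\pi)$, $\alpha$ does not lie in $\gamma_i(\OO_S^\times)^{r_i}$. Therefore $\alpha V_0 \cap \gamma_i(\OO_S^\times)^{r_i} = \emptyset$ for every $i$.

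\emph{Handling $I$.} This set is finite, and we may remove from it the elements not in $\alpha V_0$. Each remaining $\beta \in I \cap \alpha V_0$ satisfies $\beta \neq \alpha$, so $\beta\alpha^{-1}$ is a nontrivial element of $V_0$. It suffices to find a finite-index subgroup $V \subseteq V_0$ containing none of these finitely many nontrivial elements $\beta\alpha^{-1}$.

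\emph{Main step.} For each nontrivial $u$, I would find a finite-index subgroup $W_u \subseteq V_0$ with $u \notin W_u$, and then take $V = V_0 \cap \bigcap_u W_u$. Write $V_0 \cong \mu \times \Z^m$ with $\mu$ finite.
\begin{itemize}
\item If $u$ has infinite order, it has a nonzero coordinate $c$ in some $\Z$ factor. Taking $W_u = \mu \times (N\Z)^m$ with $N \nmid c$ excludes $u$, and $W_u$ has finite index.
\item If $u \in \mu$ is a nontrivial torsion element, take $W_u = \{1\} \times \Z^m$, which also has finite index.
\end{itemize}
So finitely generated abelian groups are residually finite, and the intersection $V$ has finite index.

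With this $V$, $\alpha V$ avoids both $I$ and every coset $\gamma_i(\OO_S^\times)^{r_i}$, so $\alpha V \cap \Red(\pi) = \emptyset$. By the definition of $\Red(\pi)$, this means every fibre $\pi^{-1}(\beta)$ with $\beta \in \alpha V$ is irreducible, not a branch fibre. There is no real obstacle here; the only mildly delicate point is this residual-finiteness step.
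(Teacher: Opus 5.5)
Your argument is correct and is the routine deduction the paper means when it calls this an immediate corollary of Theorem~\ref{thm:sunitHilbert}. Taking $V_0=(\OO_S^\times)^{r}$ with $r$ a common multiple of the $r_i$ makes $\alpha V_0$ disjoint from every coset $\gamma_i(\OO_S^\times)^{r_i}$, and residual finiteness of the finitely generated abelian group $V_0$ then removes the finitely many elements of $I$. The paper writes out no proof, so your version simply supplies the omitted details.
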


It is natural to ask if there
is a simple criterion 
that would guarantee the finiteness of $\Red(\pi) \cap \OO_S^\times$.
The following
theorem gives such a criterion. It is convenient here to work
in greater generality and consider finitely generated subgroups of $K^\times$;
of course any such subgroup is contained in $\OO_S^\times$ for a suitable
finite set $S$ of non-archimedian places.
\begin{thm}\label{thm:general} 
Let $K$ be a number field and let $\Theta$ be a finitely generated
but infinite subgroup of $K^\times$.
Let $\pi : X \rightarrow \PP^1$ be an absolutely irreducible cover
defined over $K$. Then the following are equivalent. 
\begin{enumerate}[\upshape (I)]
\item $\Theta \cap \Red(\pi)$ is infinite. 
\item Consider $\pi$ as an element of 
	the function field $K(X)$. Then there is
		some $\gamma \in \Theta$ and some $\mu \in K(X)$
such that
		\begin{equation}\label{eqn:pimu}
			\pi \; = \; -4 \cdot \gamma \cdot \mu^4 \qquad \text{or} \qquad \gamma \cdot \mu^\ell \quad \text{(where $\ell$ is prime)}.
		\end{equation}
\end{enumerate}
\end{thm}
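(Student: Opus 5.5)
The plan is to prove Theorem~\ref{thm:general} by reducing to Theorem~\ref{thm:sunitHilbert} and then analysing the infinite pieces $\gamma\cdot(\Theta)^r$ through Capelli's theorem on the reducibility of $x^n-a$.

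\textbf{(II)$\Rightarrow$(I).} Suppose first that $\pi=\gamma\mu^\ell$ with $\ell$ prime. For every $\beta\in\Theta^\ell$, the fibre over $\gamma\beta^{\ell}$ is the fibre of $\mu$ over the set $\{\mu^\ell = \beta^\ell\}$. This set splits into the Galois-stable subsets $\mu=\zeta\beta$, with $\zeta$ running over $\ell$-th roots of unity, and $\mu=\beta$ is a $K$-rational piece of it. Hence the fibre is reducible whenever $\deg\pi>\deg\mu$. If instead $\deg\mu=\deg\pi$, then $\ell=1$, which is impossible since $\ell$ is prime; indeed $\deg\pi=\ell\deg\mu$ always. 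Since $\Theta$ is infinite, $\gamma\Theta^\ell$ is infinite. The case $\pi=-4\gamma\mu^4$ is analogous, using the identity $x^4+4b^4=(x^2+2bx+2b^2)(x^2-2bx+2b^2)$ applied at $\alpha=-4\gamma\beta^4$.

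\textbf{(I)$\Rightarrow$(II).} Enlarge to a finite $S$ with $\Theta\subset\OO_S^\times$ and apply Theorem~\ref{thm:sunitHilbert}. The set $\Theta\cap\Red(\pi)$ is infinite, while $I$ is finite. Since $\Theta$ is finitely generated, the pigeonhole principle gives some $i$ with $\Theta\cap\gamma_i(\OO_S^\times)^{r_i}$ infinite. Translating and passing to a subgroup, we obtain $\gamma\in\Theta$ and an infinite subgroup $\Lambda=\Theta\cap(\OO_S^\times)^{r}$-type coset, with $r\ge 2$, such that $\pi^{-1}(\gamma\delta)$ is reducible for all $\delta$ in an infinite coset $\gamma\Lambda$.

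The key step is to pass from this arithmetic statement to a geometric one. Consider the cover $X'=X\times_{\PP^1}\PP^1$ obtained by pulling $\pi$ back along $t\mapsto\gamma t^{r}$; this is the curve whose function field is $K(X)(\sqrt[r]{\pi/\gamma})$. Reducibility of $\pi^{-1}(\gamma t^r)$ for infinitely many $t$ that are $S$-units, together with Theorem~\ref{thm:sunitHilbert} applied now to the pullback cover and iterated, should force the pulled-back cover $X'\to\PP^1_t$ to be reducible over $K$. Otherwise its reducible $S$-unit locus would again be a finite set together with cosets of powers, and we can shrink further.

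I expect this termination argument to be the main obstacle. The exponents cannot grow indefinitely: a degree bound on the components of the pullback, together with the fact that $X$ is absolutely irreducible, limits the index. I would first try to control this by working with the Galois closure and a ramification count.

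Once $K(X)\otimes_{K(t)}K(t)$ via $t^r=\pi/\gamma$ is reducible, the polynomial $Y^r-\pi/\gamma$ is reducible over $K(X)$. Capelli's theorem then gives either $\pi/\gamma=\mu^\ell$ for a prime $\ell\mid r$, or $4\mid r$ and $\pi/\gamma=-4\mu^4$. This is exactly condition~\eqref{eqn:pimu}.
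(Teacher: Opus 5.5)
\emph{There is a genuine gap.} Your final step is sound: once you know $y^r-\pi/\gamma$ is reducible in $K(X)[y]$ for some $\gamma\in\Theta$ and $r\ge 2$, Capelli (Lemma~\ref{lem:Lang}) over the field $K(X)$ gives \eqref{eqn:pimu} at once. But producing that reducibility is the whole content of (I)$\Rightarrow$(II), and you have not done it.

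The mechanism you sketch does not work as stated. Reducibility of $\pi^{-1}(\gamma u^r)$ for infinitely many $S$-units $u$ does not make the pullback reducible. Over $\Q$ with $\Theta=\langle 2\rangle$, $\pi=x^4$, $\gamma=1$, $r=3$, every fibre over $u^3$ with $u=4^k$ is reducible, yet $y^3-x^4$ is irreducible over $\Q(x)$.

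Iterating Theorem~\ref{thm:sunitHilbert} on an irreducible pullback $\pi'$ cannot by itself yield a contradiction either. That theorem only describes the set $\OO_S^\times\cap\Red(\pi')$, and $\OO_S^\times$ is itself a finite union of cosets of $(\OO_S^\times)^2$. Moreover, Theorem~\ref{thm:bad}(ii) shows that $\OO_S^\times\subseteq\Red(\pi')$ really occurs for irreducible covers, and a pullback of an absolutely irreducible cover need not be absolutely irreducible. So there is no evident decreasing quantity, and the appeal to a ``degree bound'' and a ``ramification count'' is not an argument.

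\emph{The missing idea.} Do not use Theorem~\ref{thm:sunitHilbert} as a black box; keep the subgroup that produces the coset.
\begin{enumerate}[(a)]
\item By Theorem~\ref{thm:Hilbert} there is an intransitive $H\le G$ with $\Theta\cap\varphi_H(X_H(K))$ infinite.
\item Lemma~\ref{lem:infintersect} then gives $\varphi_H=\gamma g^r$, with $g:X_H\to\PP^1$ an isomorphism, $\gamma\in\Theta$ and $r=[G:H]\ge 2$. Thus $\MM^H=\KK(g)$ with $g^r=t/\gamma$, so $X_H\to\PP^1$ is exactly your base change $s\mapsto\gamma s^r$.
\item The algebra $\LL\otimes_{\KK}\MM^H\cong\MM^H[x]/(F)$ is not a field. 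Indeed, $F$ factors over $\MM^H$ along the $H$-orbits on $\theta_1,\dots,\theta_n$, and $H$ is intransitive.
\item The same algebra is also isomorphic to $\LL[y]/(y^r-\pi/\gamma)$. Hence $y^r-\pi/\gamma$ is reducible over $K(X)$, and your Capelli step concludes.
\end{enumerate}

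With this filled in, your route genuinely differs from the paper's. The paper builds the subgroups $N$ and $N'=NR$ and an intermediate curve $Y$ with $\pi=\delta\psi^m$. It then uses Lemma~\ref{lem:Galois} to show $\eta^{-1}(\gamma)$ is reducible, and applies Capelli over $K$ to $x^m-\gamma/\delta$. The tensor-product argument is shorter and does not use the absolute irreducibility of $X$.

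Finally, there is a slip in (II)$\Rightarrow$(I) in the case $\pi=-4\gamma\mu^4$. You must use the fibres over $\gamma\beta^4\in\Theta$, where $\pi-\gamma\beta^4=-\gamma(4\mu^4+\beta^4)$ factors by the identity you quote. The points $-4\gamma\beta^4$ need not lie in $\Theta$.
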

\begin{proof}[Proof of {\upshape ``(II)} implies {\upshape (I)''}]
The proof that (II) implies (I) is elementary and 
we give it now. 
Let $\delta \in \Theta$ and write
	\begin{equation}\label{eqn:shape}
		\beta=\gamma \delta^4 \qquad \text{or} \qquad \gamma \delta^\ell
	\end{equation}
according to whether $\pi$ satisfies the first
or second case of \eqref{eqn:pimu}. Note that
we may write $(\pi-\beta)/\gamma=\kappa_1 \cdot \kappa_2$,
where
\[
	\kappa_1=-2 \mu^2+2 \delta \mu -\delta^2, \qquad
	\kappa_2=2 \mu^2+2\delta \mu+\delta^2
\]
in the first case, and
	\[
		\kappa_1=\mu-\delta, \qquad
		\kappa_2=\mu^{\ell-1}+\delta \mu^{\ell-2}+\cdots+\delta^{\ell-1}
	\]
	in the second case. We note that in either case, $\kappa_1$, $\kappa_2 \in K(X)$
	are non-constant, and observe that
	\[
		\pi^{-1}(\beta)=\kappa_1^{-1}(0) \cup \kappa_2^{-1}(0)
	\]
where it is easy to see that the union is disjoint. 
	Thus $\pi^{-1}(\beta)$ is reducible. Since
	this is true for all $\beta$ of the shape
	\eqref{eqn:shape} with $\delta \in \Theta$,
	we conclude that $\Theta \cap \Red(\pi)$ is 
	infinite.
\end{proof}
We note that, in the above deduction of (II) from (I), we did not
use the assumption that $X$ is absolutely irreducible.
The proof that (I) implies (II), which makes use of that assumption, is given in Section~\ref{sec:general}.
We now deduce Corollary~\ref{cor:stronger} from Theorem~\ref{thm:general}.
\begin{proof}[Proof of Corollary~\ref{cor:stronger}]
We assume the hypotheses of Corollary~\ref{cor:stronger}.
Let $\pi : \PP^1 \rightarrow \PP^1$ be given by
$\pi(x)=f(x)$. This is an absolutely irreducible
cover defined over $\Q$. Let $\Theta$ be the subgroup of $\Q^\times$
generated by $\{p_1,\dotsc,p_s\}$. 

Clearly (iii) implies (ii). Let us show  
	that (i) implies (iii). 
Let $m=4$ or $\ell$ according to whether we are
	in the first or second case of \eqref{eqn:stronger}.
	Choose any $e_i \in \mathbb{N}$ such that
	$e_i \equiv a_i \pmod{m}$. Let
	\[
		h(x)=p_1^{(a_1-e_1)/m} \cdots
		p_s^{(a_s-e_s)/m} \cdot g(x).
	\]
	Then, from \eqref{eqn:stronger}, we have
	\[
		f(x)-p_1^{e_1} \cdots p_s^{e_s}=
		-p_1^{e_1} \cdots p_s^{e_s} (2 h(x)^2+2h(x)+1)(2h(x)^2-2h(x)+1),
	\]
	in the first case, and
	\[
		f(x)-p_1^{e_1} \cdots p_s^{e_s}=
		p_1^{e_1} \cdots p_s^{e_s} (h(x)-1)(h(x)^{\ell-1}+h(x)^{\ell-2}+\cdots+1),
	\]
	in the second case. This shows that (i) implies (iii).

	Finally we show that (ii) implies (i).
Suppose (ii).
Thus, by assumption there are infinitely many 
$\beta \in \Theta$ such that
the polynomial $f(x)-\beta$ is reducible, and therefore
$\pi^{-1}(\beta)$ is reducible. 
Thus $\Theta \cap \Red(\pi)$ is infinite.
By Theorem~\ref{thm:general}, there
is some $\gamma \in \Theta$ and some
$\mu(x) \in \Q(x)$ such that
\[
	f(x)=-4 \cdot \gamma \cdot \mu(x)^4 \qquad
	\text{or} \qquad f(x)=\gamma \cdot \mu(x)^\ell
	\quad \text{($\ell$ prime)}.
\]
It immediately follows
that $\mu(x) \in \Q[x]$. Now $\gamma=p_1^{e_1} \cdots p_s^{e_s}$
for some \hbox{$e_1,\dotsc,e_s \in \Z$}.
	Choosing $a_i \in \mathbb{N}$
	satisfying $a_i \equiv e_i \pmod{4}$ or $a_i \equiv e_i \pmod{\ell}$ according to whether we are in the first 
	or second case yields (i).
\end{proof}

\section{Proof of Theorem~\ref{thm:bad}}\label{sec:thin}
Let $K$ be a number field. 
We give a definition of thin subsets of $\PP^1(K)$ following
Serre \cite[chapter 9]{SerreMW}, \cite[chapter 3]{SerreTopics}.
A \textbf{type I thin subset of $\PP^1(K)$}
is simply a finite subset of $\PP^1(K)$. A \textbf{type II thin subset of
$\PP^1(K)$} has the form $\varphi(C(K))$ where $C$ is a smooth projective
absolutely irreducible curve defined over $K$, and $\varphi : C \rightarrow \PP^1$ is a surjective morphism of degree $\ge 2$ defined over $K$.
A \textbf{thin subset of $\PP^1(K)$} is a finite union of thin subsets
of types I and II. Serre gives a more general definition
of thin subsets of $\PP^n(K)$ for all $n$, but we shall not need that.
We are now ready to prove part (i) of Theorem~\ref{thm:bad}.
\begin{lem}\label{lem:badi}
Let $K$ be a number field and let $S$ be a finite set of non-archimedean
places of $K$. 
Then $\OO_S^\times$ is contained in a thin subset of $\PP^1(K)$.
\end{lem}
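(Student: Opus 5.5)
The natural approach is to use the finite generation of $\OO_S^\times$. By Dirichlet's $S$-Unit Theorem, $\OO_S^\times$ is finitely generated. Hence for any integer $n \ge 2$ the quotient $\OO_S^\times/(\OO_S^\times)^n$ is finite. Fix $n=2$, and pick coset representatives $\gamma_1,\dotsc,\gamma_m \in \OO_S^\times$ of $\OO_S^\times/(\OO_S^\times)^2$. Then
\[
\OO_S^\times \; = \; \bigcup_{i=1}^m \gamma_i \cdot (\OO_S^\times)^2 \; \subseteq \; \bigcup_{i=1}^m \gamma_i \cdot (K^\times)^2 .
\]
So it suffices to show that each set $\gamma_i \cdot (K^\times)^2$ lies in a type II thin subset of $\PP^1(K)$; a finite union of these is then thin by definition.

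For the second step, fix $\gamma \in K^\times$ and take $C=\PP^1$, which is smooth, projective and absolutely irreducible over $K$. Define $\varphi_\gamma : \PP^1 \to \PP^1$ by $\varphi_\gamma(x)=\gamma x^2$, that is, $(x:y)\mapsto(\gamma x^2 : y^2)$. This is a surjective morphism of degree $2$ defined over $K$. Any $\beta = \gamma \delta^2$ with $\delta \in K^\times$ equals $\varphi_\gamma(\delta)$ with $\delta \in \PP^1(K)$. Hence $\gamma \cdot (K^\times)^2 \subseteq \varphi_\gamma(\PP^1(K))$, which is a type II thin set.

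Combining the two steps, $\OO_S^\times \subseteq \bigcup_i \varphi_{\gamma_i}(\PP^1(K))$, which is a thin subset of $\PP^1(K)$.

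There is no real obstacle here. The only input is the finiteness of $\OO_S^\times/(\OO_S^\times)^2$, which follows from finite generation: a finitely generated abelian group $A$ has $A/A^2$ of order at most $2^{\text{rank}+1}$. One small point to check is that the definition of type II requires $C$ to be absolutely irreducible and $\varphi$ to have degree $\ge 2$; both hold for $\varphi_\gamma$. The argument also suggests the construction for part (ii) of Theorem~\ref{thm:bad}. Take $X$ to be the disjoint-union-like cover built from these maps, or more simply a single cover such as $x \mapsto x^{2}$ composed suitably, whose fibres over each coset of squares split.
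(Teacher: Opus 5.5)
Your argument is correct and is essentially the paper's proof: write $\OO_S^\times$ as the finite union of the cosets $\delta_i(\OO_S^\times)^2$, and observe that each coset lies in $\varphi_i(\PP^1(K))$ for the degree-$2$ map $\varphi_i(t)=\delta_i t^2$, which gives a type II thin set. Two side remarks are off but do not affect the lemma. First, the bound $\#(A/A^2)\le 2^{\mathrm{rank}+1}$ requires cyclic torsion; this holds for $\OO_S^\times$ but fails for general finitely generated $A$, and in any case only finiteness is needed. Second, your closing sketch for Theorem~\ref{thm:bad}(ii) would not work as stated, since a disjoint union is not an irreducible cover; the paper instead uses the Kummer-theoretic element $\sqrt{t}+\sqrt{a_1t}+\cdots+\sqrt{a_nt}$.
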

\begin{proof}
Recall that $\OO_S^\times$ is a finitely generated
subgroup of $K^\times$.
Let $\delta_1,\dotsc,\delta_m$ be 
coset representatives for $(\OO_S^\times)^2$ in $\OO_S$.
Let 
$\varphi_i : \PP^1 \rightarrow \PP^1$ be given by
$\varphi_i(t)= \delta_i \cdot t^2$. Then
\[
	\OO_S^\times \; = \; \bigcup_{i=1}^m \delta_i \cdot (\OO_S^\times)^2
	\; \subset \;
\bigcup_{i=1}^m \varphi_i(\PP^1(K)).
\]
The right hand-side of the inclusion is a finite union of thin sets of type II and hence a thin set.
\end{proof}

The proof of part (ii) of Theorem~\ref{thm:bad} will require
an elementary lemma from Kummer theory.
\begin{lem}\label{lem:2n}
Let $K$ be a field of characteristic $\ne 2$.
Let $A$ be a finite subset of $K^\times$,
and suppose that
	the images of $a \in A$ in the $\F_2$-vector space $K^\times/(K^\times)^2$
are linearly independent.
Then 
\begin{enumerate}[(i)]
\item $[K(\{\sqrt{a} : a \in A\}) : K]=2^{\# A}$;
\item
Let 
\[
\gamma_A=\sum_{a \in A} \sqrt{a}
\]
where $\sqrt{a}$ is any choice of square root
of $a$ in $\overline{K}$. Then 
\[
K(\gamma_A)=K(\{\sqrt{a} : a \in A\}).
\]
\end{enumerate}
\end{lem}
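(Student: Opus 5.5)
Both parts are handled by induction on $n=\#A$, viewing everything as Kummer theory over a growing tower of fields. Write $A=\{a_1,\dotsc,a_n\}$, $L_0=K$ and $L_j=L_{j-1}(\sqrt{a_j})$. For (i) it suffices to show that $a_j$ is not a square in $L_{j-1}$ for each $j$, since then every step of the tower has degree $2$.

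\textbf{Step 1: non-squares stay non-squares.} We prove a slightly stronger statement by induction on $j$: every element of $K^\times$ whose class in $K^\times/(K^\times)^2$ lies outside the span of $a_1,\dotsc,a_j$ is a non-square in $L_j$. The case $j=0$ is trivial.

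For the inductive step, suppose $b\in K^\times$ lies outside the span of $a_1,\dotsc,a_j$ but $b=(x+y\sqrt{a_j})^2$ with $x,y\in L_{j-1}$. Expanding gives $b=x^2+a_jy^2+2xy\sqrt{a_j}$. Since $\sqrt{a_j}\notin L_{j-1}$ by induction and $\operatorname{char} K\neq 2$, the coefficient of $\sqrt{a_j}$ must vanish, so $xy=0$. If $y=0$, then $b$ is a square in $L_{j-1}$. If $x=0$, then $b a_j$ is a square in $L_{j-1}$. In both cases the element in question lies outside the span of $a_1,\dotsc,a_{j-1}$, contradicting the induction hypothesis. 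This gives (i), and it also shows that $\Gal(L_n/K)\cong(\Z/2)^n$, acting by independent sign changes $\sqrt{a_i}\mapsto\pm\sqrt{a_i}$.

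\textbf{Step 2: $K(\gamma_A)$ is the whole field.} Clearly $K(\gamma_A)\subseteq L_n$. By Galois theory it suffices to show that only the identity of $G=\Gal(L_n/K)$ fixes $\gamma_A$. Take $\sigma\in G$ and let $T\subseteq A$ be the set of $a$ with $\sigma(\sqrt{a})=-\sqrt{a}$. Then
\[
\gamma_A-\sigma(\gamma_A)=2\sum_{a\in T}\sqrt{a}.
\]
If $\sigma$ fixes $\gamma_A$, this gives $\sum_{a\in T}\sqrt{a}=0$. But the elements $\prod_{a\in U}\sqrt{a}$, for $U\subseteq A$, form a $K$-basis of $L_n$: the tower argument shows they span, and (i) gives the dimension count. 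In particular the $\sqrt{a}$ for $a\in T$ are linearly independent over $K$, so $T=\emptyset$ and $\sigma=\mathrm{id}$.

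The only delicate point is the strengthened induction hypothesis in Step 1. It has to cover all classes outside the span, not just the $a_i$ themselves, in order to handle the $x=0$ case.
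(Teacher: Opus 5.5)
Your proof is correct. Part (ii) is the paper's argument. A $\sigma\in\Gal(L_n/K)$ fixing $\gamma_A$ gives $\sum_{a\in T}\sqrt{a}=0$, and (i) rules this out. You also make explicit the step the paper leaves implicit: (i) forces the $2^{\#A}$ monomials $\prod_{a\in U}\sqrt{a}$ to form a $K$-basis of $L_n$, so the $\sqrt{a}$ are $K$-linearly independent.

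The real difference is in part (i). The paper just cites Kummer theory (Theorem VI.8.1 of Lang's \emph{Algebra}). That theorem identifies $\Gal(K(\{\sqrt{a} : a\in A\})/K)$ with the dual of the subgroup of $K^\times/(K^\times)^2$ generated by the images of $A$. You instead prove the special case you need directly. You climb the tower $L_0\subset L_1\subset\cdots\subset L_n$ with the strengthened hypothesis that every class outside $\langle a_1,\dotsc,a_j\rangle$ remains a non-square in $L_j$. That strengthening is exactly what handles the $x=0$ case, where the new element $ba_j$ appears.

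Your route is self-contained. It uses only that $\{1,\sqrt{a_j}\}$ is an $L_{j-1}$-basis of $L_j$ and that $\operatorname{char}K\neq 2$. The citation is shorter and places the statement inside the general Kummer correspondence, which also covers higher exponents when the relevant roots of unity are present. That extra generality is not needed here.
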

\begin{proof}
	Part (i) is a special case of standard theorem in Kummer Theory; see for example Theorem~VI.8.1 of 
	Lang's Algebra \cite{Lang}. For (ii) write $A=\{a_1,\dotsc,a_n\}$ and
$L=K(\sqrt{a_1},\dotsc,\sqrt{a_n})$.
Clearly
$K(\gamma_A) \subseteq L$.
The extension $L/K$ is Galois; 
write $G=\Gal(L/K)$ for the Galois group.
Let $\sigma \in G$ and suppose $\sigma(\gamma_A)=\gamma_A$.
Let $I=\{i : \sigma(\sqrt{a_i}) =-\sqrt{a_i}\}$. From $\sigma(\gamma_A)=\gamma_A$ we obtain
\[
\sum_{i \in I} \sqrt{a_i}=0. 
\]
	This contradicts part (i) if $I \ne \emptyset$.
Thus $I=\emptyset$, and therefore $\sigma=1$. It follows
that $\gamma_A$ is not contained in a proper subfield
of $L$, completing the proof.
\end{proof}

\begin{proof}[Proof of Theorem~\ref{thm:bad}]
Part (i) of Theorem~\ref{thm:bad} was established
	in Lemma~\ref{lem:badi}. We now prove (ii). 
	However, it is worth noting that Hilbert's Irreducibility
	Theorem also allows us to deduce (i) from (ii).

Recall that $K$ is a number field and $S$ is
a finite set of non-archimedean places. Therefore $\OO_S^\times$
is a finitely generated subgroup of $K^\times$.
Let $n$ be the dimension of $\OO_S^\times/(\OO_S^\times)^2$
as an $\F_2$-vector space.
Let $a_1,a_2,\dotsc,a_n$ be elements in $\OO_S^\times$
whose images in 
 $\OO_S^\times/(\OO_S^\times)^2$ form an $\F_2$-basis.
Let $\KK=K(t)$ and let 
\[
A(t)=\{t, a_1 t, a_2 t,\dotsc,a_n t\}.
\]
	Observe that the images of the elements of $A(t)$
	are $\F_2$-linearly independent in $\KK^\times/(\KK^\times)^2$.
%
Write
\[
\gamma(t)=\sqrt{t}+\sqrt{a_1 t}+\cdots+\sqrt{a_n t}.
\]
Then, by Lemma~\ref{lem:2n},
the extension $\KK(\gamma(t))/\KK$ is Galois and has degree $2^{n+1}$.
It is therefore clear that the conjugates of $\gamma(t)$ in $\KK(\gamma(t))$
are
\begin{equation}\label{eqn:signchoices}
\pm \sqrt{t} \pm \sqrt{a_1 t} \pm \cdots \pm \sqrt{a_n t},
\end{equation}
where any of the $2^{n+1}$ possible choices of signs is allowed.

Let $F \in \KK[x]$ be the minimal polynomial of $\gamma(t)$
over $\KK=K(t)$. Then $F$ is monic in $x$ of degree
	$2^{n+1}$, and moreover belongs to $K[t,x]$.
Since it is irreducible
over $K(t)$, is irreducible in $K[t,x]$. The roots of $F$
as a polynomial in $x$ are given by \eqref{eqn:signchoices}.

Now let $\alpha \in \OO_S^\times$ and consider the specialization
$F(\alpha,x)$. The roots of this in $\overline{K}$ are
\[
\pm \sqrt{\alpha} \pm \sqrt{a_1 \alpha} \pm \cdots \pm \sqrt{a_n \alpha}
\]
which are all contained in $K(\sqrt{a_1},\dotsc,\sqrt{a_n})$. 
As this is an extension of $K$ having degree $2^n$, we see that 
the degree $2^{n+1}$-polynomial $F(\alpha,x) \in K[x]$ is reducible. 
Finally, we let $X/K$ be the normalization of the irreducible
plane curve 
\[
X^\prime \; : \;	F(t,x)=0, 
\]
and we let $\pi$ be the morphism $X \rightarrow \PP^1$
which  is induced by the map $\psi : X^\prime \rightarrow \PP^1$ 
given by $\psi(t,x)=t$. It immediately follows from the 
above that $\pi$ is an irreducible cover, but that
$\pi^{-1}(\alpha)$ is reducible for all $\alpha \in \OO_S^\times$.
\end{proof}

\section{Specialisations and Hilbert's Irreducibility}\label{sec:special}
For the proof of Theorem~\ref{thm:sunitHilbert}
it is convenient to go into the details
of the proof of Hilbert's Irreducibility Theorem
for covers of $\PP^1$; this will allow us
to state a slightly more precise version
of Hilbert's Irreducibility Theorem than
is usually given. We stress that the ideas in this
section are standard, and can be found
(for example) in \cite[Section 2.4]{KoenigNeftin}.

Let $K$ be a number field and let $\pi : X \rightarrow \PP^1$
be an irreducible cover defined over $K$.
Write $\KK=K(\PP^1)$ and $\LL=K(X)$ for the function fields
of $\PP^1$ and $X$ respectively. 
By the Primitive Element Theorem,
$\LL=\KK(\theta)$, where $\theta$ is the root
of some irreducible polynomial $F(x) \in \KK[x]$ of degree 
$n=[\LL:\KK]=\deg(\pi)$.
Let $\MM$ be the splitting field of $F$;
thus $\MM=\KK(\theta_1,\dotsc,\theta_n)$ where
$\theta_1=\theta$, and the $\theta_i$ are roots of $F$ in $\MM$.
The Galois group $G=\Gal(\MM/\KK)$ is a transitive subgroup
of $\Sym(\theta_1,\dotsc,\theta_n)\cong S_n$. When
we speak of transitive subgroups of $G$, it is 
with respect to the action on $\theta_1,\dotsc,\theta_n$.

\medskip

Let $\beta$ be a 
place of $\KK$; this is simply the maximal ideal
of a valuation ring contained in $\KK$. Let $P$ be a place of $\MM$
above $\beta$. 
We define the \textbf{decomposition group} of $P/\beta$
to be
\[
D(P/\beta) = \{ \sigma \in G \; : \; \sigma(P)=P\}.
\]
Associated to $P$ is a 
\textbf{valuation ring} and a \textbf{maximal ideal}
of the valuation ring
\[
	\OO_{\MM,P}=\{f \in \MM \; : \; \ord_P(f) \ge 0\},
	\qquad
	\fm_{\MM,P}=\{f \in \OO_{\MM,P} \; : \; \ord_P(f) \ge 1\}.
\]
The \textbf{residue field} of $P$ is defined as $K(P)=\OO_{\MM,P}/\fm_{\MM,P}$.
Let $\sigma \in D(P/\beta)$. Then 
\[
	\sigma(\OO_{\MM,P})=\OO_{\MM,P}, \qquad
	\sigma(\fm_{\MM,P})=\fm_{\MM,P}.
\]
Thus $\sigma$ induces an automorphism of the residue
	field $K(P)=\OO_{\MM,P}/\fm_{\MM,P}$.
Moreover, $\sigma \in D(P/\beta) \subseteq \Gal(\MM/\KK)$ fixes pointwise
both of
\[
	\OO_{\KK,\beta}=\OO_{\MM,P} \cap \KK, 
	\qquad \fm_{\KK,\beta}=\fm_{\MM,P} \cap \KK.
\]
Thus, $\sigma$ fixes pointwise $K(\beta)=\OO_{\KK,\beta}/\fm_{\KK,\beta} \subseteq K(P)$.
Hence $\sigma$ induces an automorphism of $K(P)/K(\beta)$. 
	It turns out that $K(P)$ is a Galois extension of $K(\beta)$
	and the induced map $D(P/\beta) \rightarrow \Gal(K(P)/K(\beta))$
	is an isomorphism
(see for example \cite[Proposition 20]{SerreLocalFields}). We define
$\deg(P/\beta)=[K(P) : K(\beta)]=\#D(P/\beta)$.

Henceforth we suppose $\beta$ is a degree $1$ place of $\KK$
and thus corresponds naturally to a $K$-rational point of $\PP^1$.

Now let $H$ be a subgroup of $G$. Let $\MM^H$
be the fixed field of $H$ and let $X_H/K$ be a smooth
irreducible curve whose function field is $\MM^H$.
We write $\varphi_H : X_H \rightarrow \PP^1$ for the morphism,
defined over $K$, corresponding to the function field
extension $\MM^H/\KK$.
\begin{lem}\label{lem:degree1}
$D(P/\beta) \subseteq H$
if and only if 
the restriction of $P$ to $\MM^H$ is a degree $1$
place.
\end{lem}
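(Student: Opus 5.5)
Let $Q = P \cap \MM^H$ denote the restriction of $P$ to $\MM^H$. Since $K(\beta)\subseteq K(Q)\subseteq K(P)$ and $\deg(P/\beta)=[K(P):K(\beta)]$ is multiplicative in towers, the claim "$Q$ has degree $1$" means $K(Q)=K(\beta)=K$. The plan is to apply to the Galois extension $\MM/\MM^H$ (with group $H$) the same facts quoted above for $\MM/\KK$. These are: the decomposition group maps isomorphically onto the Galois group of the residue extension, so $D(P/Q)=D(P/\beta)\cap H$; and $K(P)/K(Q)$ is Galois with $\#D(P/Q)=[K(P):K(Q)]$. This last equality uses that we are over a perfect field and that the residue extension of the Galois closure is normal; it is the cited \cite[Proposition 20]{SerreLocalFields} applied to $\MM/\MM^H$.

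With these facts the equivalence reduces to a counting argument:
\[
[K(Q):K(\beta)] \;=\; \frac{[K(P):K(\beta)]}{[K(P):K(Q)]} \;=\; \frac{\#D(P/\beta)}{\#\bigl(D(P/\beta)\cap H\bigr)}.
\]
The right-hand side equals $1$ exactly when $D(P/\beta)\cap H = D(P/\beta)$, that is, when $D(P/\beta)\subseteq H$. This proves both directions at once.

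The only real care needed is justifying $D(P/Q)=D(P/\beta)\cap H$. This is immediate from the definitions: $D(P/Q)=\{\sigma\in H : \sigma(P)=P\}$. The same reference gives the isomorphism for the subextension, and everything is separable since we are in characteristic $0$.

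A more hands-on alternative avoids the cardinality formula for the subextension. One shows directly that $D(P/\beta)$ surjects onto $\Gal(K(P)/K(\beta))$ and restricts to $H\cap D(P/\beta)$, which fixes $K(Q)$. Galois correspondence on residue fields then identifies $K(Q)$ with the fixed field of the image of $D(P/\beta)\cap H$. Consequently $K(Q)=K(\beta)$ if and only if that image is everything, which by injectivity means $D(P/\beta)\subseteq H$. The main (mild) obstacle is this identification of $K(Q)$ as the full fixed field, rather than merely a subfield of it. I would settle it by the degree count above, which gives equality.
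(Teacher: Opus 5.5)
Your proposal is the paper's proof: with $Q$ the restriction of $P$ to $\MM^H$, both arguments use $D(P/Q)=D(P/\beta)\cap H$ and the count $\deg(Q/\beta)=\#D(P/\beta)/\#\bigl(D(P/\beta)\cap H\bigr)$. One correction to your justification: the kernel of $D(P/\beta)\to\Gal(K(P)/K(\beta))$ is the inertia group, so $\#D=[K(P):K(\cdot)]$ requires $P$ to be unramified over $\beta$ (which holds in every application, since $\beta\notin\BV(\pi)$); perfect residue fields are not enough, and for ramified $\beta$ the lemma fails, e.g.\ $\MM=\KK(\sqrt{t})$, $\beta=0$, $H=1$.
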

\begin{proof}
Write $Q$ for the restriction of $P$ to $\MM^H$.
We note that 
\[ 
	D(P/Q) \; =\; D(P/\beta) \cap H.
\]
Then 
\[
\deg(Q/\beta)=\frac{\deg(P/\beta)}{\deg(P/Q)}=\frac{\# D(P/\beta)}{\# D(P/Q)}
=\frac{\# D(P/\beta)}{\# D(P/\beta) \cap H}.
\]
The lemma follows.
\end{proof}

\begin{thm}[Hilbert's Irreducibility Theorem]\label{thm:Hilbert}
Let $\fH$ be the set of intransitive subgroups $H$ of $G$
such that $X_H$ is absolutely irreducible. 
	Let $\beta \in \PP^1(K) \setminus \BV(\pi)$. Then
$\pi^{-1}(\beta)$ is reducible if and only if $\beta$
belongs to
\begin{equation}\label{eqn:bigcup}
\bigcup_{H \in \fH} \varphi_H(X_H(K)).
\end{equation}
Moreover, $\deg(\varphi_H) \ge 2$ for all $H \in \fH$; therefore
\eqref{eqn:bigcup} is a thin set.
\end{thm}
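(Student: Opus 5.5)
Fix $\beta \in \PP^1(K)\setminus \BV(\pi)$ and a place $P$ of $\MM$ above $\beta$. The plan is to translate reducibility of the fibre into a statement about the decomposition group $D=D(P/\beta)$, then pass to fixed fields via Lemma~\ref{lem:degree1}.

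\emph{Step 1: the fibre as a $D$-set.} Since $\beta$ is unramified in $\LL/\KK$, it is unramified in the Galois closure $\MM/\KK$. Hence the geometric points of $\pi^{-1}(\beta)$ correspond to the places of $\LL=\MM^{G_1}$ above $\beta$ together with their residue field embeddings, where $G_1=\Stab(\theta_1)$. Equivalently, as $\Gal(\overline{K}/K)$-sets, the fibre is identified with $\{\theta_1,\dots,\theta_n\}$ with Galois acting through the surjection $\Gal(\overline K/K)\to \Gal(K(P)/K)\cong D$. Concretely, choose $\beta$ away from the finitely many places where the coefficients of $F$ or its discriminant are non-integral or vanish; for the remaining finitely many non-branch values, work with the local model, or use a local primitive element. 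Reducing the $\theta_i$ modulo $P$ then gives $n$ distinct roots of the specialized polynomial, and $D$ acts on them compatibly with its action on the $\theta_i$. Thus $\pi^{-1}(\beta)$ is irreducible if and only if $D$ is transitive on $\{\theta_i\}$.

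\emph{Step 2: the ``if'' direction.} Suppose $\beta=\varphi_H(Q)$ with $Q\in X_H(K)$ and $H\in\fH$. Choose $P$ above $Q$; then $Q$ is a degree~$1$ place of $\MM^H$ above $\beta$, so Lemma~\ref{lem:degree1} gives $D\subseteq H$. Since $H$ is intransitive, so is $D$, and the fibre is reducible by Step~1.

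\emph{Step 3: the ``only if'' direction.} If the fibre is reducible, put $H=D$, which is intransitive by Step~1. The restriction of $P$ to $\MM^H$ has degree~$1$ by Lemma~\ref{lem:degree1}, hence gives a point of $X_H(K)$ mapping to $\beta$. It remains to check that $X_H$ is absolutely irreducible. The constant field of $\MM^H$ embeds into the residue field of any place, in particular into that of a degree~$1$ place, which is $K$. So the constant field is $K$, and $X_H$ is absolutely irreducible, giving $H\in\fH$.

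\emph{Step 4: degrees.} We have $\deg\varphi_H=[G:H]$. If this equals $1$ then $H=G$, which is transitive, a contradiction; so $\deg\varphi_H\ge 2$. Each set $\varphi_H(X_H(K))$ with $H\in\fH$ is therefore a type~II thin set, and since there are finitely many subgroups the union~\eqref{eqn:bigcup} is thin.

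The main obstacle is Step~1: making rigorous the identification of the Galois action on the fibre with the $D$-action on the roots, uniformly for all unramified $\beta$. I would handle this by working with the integral closure of the local ring $\OO_{\KK,\beta}$ in $\LL$, which is étale over $\OO_{\KK,\beta}$ when $\beta$ is unramified. Its fibre over $\beta$ is $\pi^{-1}(\beta)$, and the standard decomposition-group description of the splitting of $\beta$ in $\LL$ then gives the orbit decomposition directly, namely one orbit per double coset $D\backslash G/G_1$. This avoids any need to exclude bad $\beta$.
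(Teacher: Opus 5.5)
Your proposal is correct and follows the paper's proof essentially step for step. You reduce reducibility of the fibre to intransitivity of $D(P/\beta)$, then apply Lemma~\ref{lem:degree1} in both directions (taking $H=D$ and embedding the constant field into the residue field $K$ for absolute irreducibility), and finish with $\deg\varphi_H=[G:H]\ge 2$. The only difference is that you justify Step~1 yourself, by counting places of $\LL$ above $\beta$ via double cosets, where the paper simply cites K\"onig--Neftin; your justification is sound and suffices, since only the number of orbits matters.
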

\begin{proof}
The action of $\Gal(\overline{K}/K)$
on the fibre $\pi^{-1}(\beta)$ induces a
permutation representation
\[
\Gal(\overline{K}/K) \; \longrightarrow \; \Sym(\pi^{-1}(\beta)) \cong S_n.
\]
The image of this permutation representation is 
	isomorphic, as a permutation group, to $\Gal(K(P)/K) \cong D(P/\beta)$, where
	$P$ is any place of $\MM$ above $\beta$ (e.g. \cite[Section 2.4]{KoenigNeftin}). 
Thus the Galois action on $\pi^{-1}(\beta)$ is intransitive if and only if
$D(P/\beta)$ is intransitive.

Suppose the Galois action on $\pi^{-1}(\beta)$ is intransitive. 
Let $P$ be any place of $\MM$
above $\beta$, and let $Q$ be its restriction to $\MM^H$, where $H=D(P/\beta)$,
which by the above discussion is intransitive.
By Lemma~\ref{lem:degree1}, the place $Q$ has degree $1$. 
Note that the residue field $K(Q)=K$ contains $\MM^H \cap \overline{K}$.
Thus 
$\MM^H \cap \overline{K}=K$ and therefore the curve $X_H/K$
is absolutely irreducible. In particular, $H \in \fH$.
Moreover, the degree $1$ place $Q$ corresponds to 
a $K$-point of $X_H$, which we also denote by $Q$.
As the place $Q$ is above $\beta$, we have $\beta=\varphi_H(Q)$.

Conversely, let $H \in \fH$, and suppose that there is some $Q \in X_H(K)$
such that $\varphi_H(Q)=\beta$. We may consider $Q$ as a degree $1$ place 
of the function field $K(X_H)=\MM^H$ above $\beta$. We extend $Q$
to a place $P$ of $\MM$. Then, by Lemma~\ref{lem:degree1} 
we see that $D(P/\beta) \subseteq H$. As $H$ is intransitive,
so is $D(P/\beta)$. Therefore Galois acts intransitively
on $\pi^{-1}(\beta)$.

It remains to show that $\deg(\varphi_H) \ge 2$ for $H \in \fH$.
We note that as $G$ is transitive, and $H \in \fH$ is intransitive,
$H \ne G$, and so $[G:H] \ge 2$. Now 
\[
	\deg(\varphi_H) \; =\; 
	[\MM^H:\KK] \; = \;
[G:H] \; \ge \; 2,
\]
	as 
the extension $\MM/\KK$ is Galois with Galois group $G$.
\end{proof}

\section{Proof of Theorem~\ref{thm:sunitHilbert}}
As before, $K$ denotes a number field, and $S$
denotes a finite number of non-archimedean
places of $K$.
For the proof of Theorem~\ref{thm:sunitHilbert} we shall
need the following famous theorem of Siegel \cite[Theorems D.8.4, D.9.1]{HindrySilverman}.
\begin{thm}[Siegel] \label{thm:Siegel}
Let $C$ be an absolutely irreducible
curve defined over $K$
and let $f \in K(C)$ be a non-constant function.
Suppose that $C$ has genus $\ge 1$, 
or that $f$ has at least three poles.
Then the set
\[
\{ P \in C(K) \; : \; f(P) \in \OO_S\}
\]
is finite.
\end{thm}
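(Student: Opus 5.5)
We would follow the classical strategy for Siegel's theorem as in \cite[Part~D]{HindrySilverman}. Only finiteness matters, so we may freely replace $K$ by a finite extension $L$ and $S$ by a larger finite set $S'$ of places of $L$ containing the places above $S$. Such a replacement only enlarges the set $\{P : f(P)\in\OO_S\}$. We also choose $S'$ so large that $C$, $f$ and the poles of $f$ have good reduction outside $S'$, and so that $\OO_{S'}$ has class number one. Writing $D$ for the polar divisor of $f$, the points in question are then exactly the $S'$-integral points of $C\setminus\operatorname{supp}(D)$.

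\emph{Genus $0$ with at least three poles.} After extending $K$, we may assume $C\cong\PP^1$ and that three distinct poles $a_1,a_2,\infty$ of $f$ are $L$-rational; we arrange the pole at $\infty$ by a M\"obius change of coordinate. Let $t$ be the coordinate. For $P$ with $f(P)\in\OO_{S'}$, the valuation $\ord_v(t(P)-a_i)$ can be negative at $v\notin S'$ only if $P$ reduces to $\infty$ modulo $v$. It can be positive only if $P$ reduces to $a_i$ modulo $v$. Both are impossible by integrality, since the pole divisor has good reduction. Hence $t(P)-a_1$ and $t(P)-a_2$ are $S'$-units. Dividing the identity $(t(P)-a_1)-(t(P)-a_2)=a_2-a_1$ by $a_2-a_1$ gives a solution of the $S'$-unit equation $u+v=1$. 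That equation has only finitely many solutions, by the standard Siegel--Mahler finiteness, itself a consequence of Roth/Thue-type approximation. So $t(P)$, and hence $P$, takes only finitely many values.

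\emph{Genus $\ge 1$.} Suppose infinitely many integral points exist. We fix one point $P_0$ and embed $C\hookrightarrow J$ into its Jacobian. For an integer $m\ge 2$, we pull back along multiplication by $m$ to get an unramified cover $\phi_m : C_m\to C$ of degree $m^{2g}$. By the Chevalley--Weil theorem, every $S'$-integral point of $C$ lifts to an $S''$-integral point of $C_m$ over one fixed finite extension $L_m$, with $S''$ finite. The function $f_m=f\circ\phi_m$ has at least $m^{2g}$ distinct poles. For an integral point $Q$ of large height, some $v\in S''\cup\{\text{archimedean}\}$ must satisfy $|f_m(Q)|_v\ge H(f_m(Q))^{1/\#S''}$. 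So $Q$ is $v$-adically close to one pole $\alpha$ of $f_m$, with exponent roughly $\deg(f_m)/\#S''$ relative to the height of a local uniformiser. We pass to an infinite subsequence in which $v$ and $\alpha$ are fixed.

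We then compare heights. By the Mumford/N\'eron--Tate quadratic growth of heights on $J$, the height on $C_m$ relative to the pole divisor is about $m^{2g}$ times smaller relative to the pole count. Concretely, choose a function $z$ on $C_m$ with a simple zero at $\alpha$. This yields an approximation $|z(Q)|_v \le H(z(Q))^{-\kappa}$ with $\kappa\to\infty$ as $m\to\infty$ (the exponent scales like $m^2$ against a bounded degree). For large $m$ this contradicts Roth's theorem applied to the algebraic number $z(\alpha)=0$ approximated by $z(Q)\in L_m$ via a Puiseux/local-parameter comparison. We expect this last step to be the main obstacle: making the height comparison between $h_{D_m}$ and $h_z$ on $C_m$ precise with uniform error terms, and converting the closeness of points on a curve into an approximation of an algebraic number by field elements to which Roth applies. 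We would first try Hindry--Silverman's formulation of Roth's theorem for curves (Theorem~D.8.3-type statements) to sidestep the local-parameter bookkeeping.
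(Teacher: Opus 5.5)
The paper gives no proof of its own here; it quotes Hindry--Silverman (Theorem D.8.4 for genus $0$ with three poles, via the unit equation, and Theorem D.9.1 for genus $\ge 1$). Your genus-$0$ reduction to $u+v=1$ is the standard argument and is fine. Your genus-$\ge 1$ sketch has the right architecture, but the decisive inequality fails as you set it up. There are two reasons, and neither is mere bookkeeping.

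\emph{The choice of $z$.} On the fixed curve $C_m$ one has $h(z(Q)) = (\deg z/\deg f_m)\,h(f(P)) + o(h(f(P)))$. So any gain must come from $\deg z$ being small compared with $\deg f_m = m^{2g}\deg f$. A function with a simple zero at $\alpha$ obtained from Riemann--Roch has degree of the order of $\genus(C_m) = m^{2g}(g-1)+1$. That gives no gain at all when $g \ge 2$; nothing bounds the degree of your $z$ independently of $m$ unless $g = 1$. The quadratic growth of heights enters only if $z$ is the restriction to $C_m \subset J$ of a \emph{fixed} function on $J$, e.g.\ a ratio of two sections of $3\Theta$ with $\Theta$ symmetric. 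Then $h(z(Q)) \le 3\hat{h}_\Theta(Q) + O(1)$ with the factor $3$ independent of $m$, and $\hat{h}_\Theta(Q) = m^{-2}\hat{h}_\Theta(j(P))$. Equivalently, $\deg(z|_{C_m}) = O(m^{2g-2})$, which is unbounded but is $O(m^{-2})\deg f_m$. Such a $z$ need not vanish at $\alpha$. You only need $|z(Q)-z(\alpha)|_v \ll d_v(Q,\alpha)$, and Roth is then applied to the algebraic number $z(\alpha)$.

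\emph{The field.} Chevalley--Weil only gives $Q \in C_m(L_m)$ with $[L_m:K]$ unbounded in $m$; indeed $L_m \supseteq K(J[m])$ as soon as there is one integral point. The exponent you actually get is $\kappa \approx \deg f_m/(e\,\#S''\deg z)$, with $e$ the pole order of $f_m$ at $\alpha$, and $\#S''$ depends on $m$. This loss is genuine. $f_m(Q) = f(P) \in K$ has the same absolute value at every place of $L_m$ above the place $v$ of $K$ where $P$ approaches $\alpha$. So a single place $w \mid v$ carries only a $1/\#\{w \mid v\}$ share of the height, and Roth's theorem at a single place of $L_m$ does not get stronger to compensate. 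If $v$ is archimedean (forced, e.g., when $S=\emptyset$), then $\#\{w \mid v\} \ge [L_m:K]/2$. This typically grows much faster than $m^2$: for a non-CM elliptic curve and $m$ prime, $[K(E[m]):K]$ has order $m^4$. So nothing forces $\kappa > 2$, even for $g=1$.

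\emph{The standard fix.} The classical proof uses weak Mordell--Weil instead of Chevalley--Weil. Infinitely many $j(P_i)$ lie in a single coset $R + mJ(K)$, so $j(P_i) = R + mQ_i$ with $Q_i$ a $K$-rational point of the \'etale cover $\{Q : R + mQ \in j(C)\}$ of $C$. Transfer $v$-adic distances through this \'etale map and apply Roth over the fixed field $K$ at the single place $v$, with $z$ as above. This yields $\log d_v(P_i,\alpha) \ge -(c/m^2)\,h(f(P_i)) - O_m(1)$ with $c$ independent of $m$. That contradicts $\log d_v(P_i,\alpha) \le -c'\,h(f(P_i)) + O(1)$ once $m$ is large. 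Alternatively, you can keep Chevalley--Weil but apply Roth simultaneously at all places of $L_m$ above $v$.
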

Siegel's Theorem can be more elegantly restated as a theorem
concerning integral points on hyperbolic curves \cite[Remark D.9.2.2]{HindrySilverman}, but the above version is easier to apply in our context.

\begin{lem}\label{lem:twopoints}
Let $\psi : C \rightarrow \PP^1$ be an absolutely irreducible
cover defined over $K$, and let
$r=\deg(\psi)$. Suppose $\BV(\psi) \subseteq \{0,\infty\}$.
Then, there is some $\delta \in K^\times$,
and some isomorphism $g : C \rightarrow \PP^1$ defined over $K$,
such that $\psi=\delta \cdot g^r$.
\end{lem}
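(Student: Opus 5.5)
The proof will be a Riemann--Hurwitz computation followed by descent to $K$.

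\emph{Genus and branching over $\overline{K}$.} Since $\psi$ is a non-constant morphism of degree $r$ and $C$ is absolutely irreducible, Riemann--Hurwitz over $\overline{K}$ gives
\[
2g(C)-2 \;=\; -2r+\sum_{P\in C(\overline{K})}(e_P-1).
\]
All ramification lies above $0$ and $\infty$. Above each of these two points the contribution is $\sum (e_P-1)=r-\#\psi^{-1}(\text{pt})\le r-1$. Hence $2g(C)-2\le -2r+2(r-1)=-2$. This forces $g(C)=0$ and both inequalities to be equalities. So $\psi^{-1}(0)$ is a single point $P_0$ and $\psi^{-1}(\infty)$ is a single point $P_\infty$, each totally ramified with $e=r$.

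\emph{Rationality of the two points.} Because $0,\infty\in\PP^1(K)$ and $\psi$ is defined over $K$, the fibres $\psi^{-1}(0)$ and $\psi^{-1}(\infty)$ are $\Gal(\overline{K}/K)$-stable. Each is a single geometric point, so $P_0,P_\infty\in C(K)$. Now $C$ is a smooth projective absolutely irreducible genus $0$ curve over $K$ with a $K$-rational point, so $C\cong\PP^1$ over $K$.

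\emph{Choosing the isomorphism.} Take a $K$-isomorphism $g : C\to\PP^1$ with $g(P_0)=0$ and $g(P_\infty)=\infty$; this is possible because $\mathrm{PGL}_2(K)$ acts doubly transitively on $\PP^1(K)$. Equivalently, $g\in K(C)$ is a function with divisor $P_0-P_\infty$, which exists by Riemann--Roch on a genus $0$ curve. On the other hand, $\psi$ as a function has $\mathrm{div}(\psi)=rP_0-rP_\infty$. So $\psi/g^r$ has trivial divisor and is therefore a constant $\delta$. Both $\psi$ and $g$ are defined over $K$, so $\delta\in K$, and $\delta\neq 0$ because $\psi$ is non-constant. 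This gives $\psi=\delta\cdot g^r$.

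The only mildly delicate point is the descent to $K$, namely showing that $P_0$ and $P_\infty$ are $K$-rational, so that the isomorphism with $\PP^1$ and the constant $\delta$ are defined over $K$ rather than just over $\overline{K}$. This follows immediately from the uniqueness of the points in the two fibres. The case $r=1$ is included trivially, since then there is no branching and $\psi$ itself is an isomorphism.
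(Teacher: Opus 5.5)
Your proposal is correct and follows the paper's proof essentially step for step. Both use Riemann--Hurwitz to force genus $0$ and a single, totally ramified point over each of $0$ and $\infty$, then $K$-rationality of $P_0,P_\infty$, then a $K$-isomorphism $g$ with $g(P_0)=0$, $g(P_\infty)=\infty$, and finally a divisor comparison. The only cosmetic difference is that you show $\psi/g^r$ is constant directly on $C$, whereas the paper passes to $h=\psi\circ g^{-1}$ on $\PP^1$ and argues that $h/x^r$ is constant.
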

\begin{proof}
By Riemann--Hurwitz applied
to $\psi$,
\[
2\genus(C) \, -\, 2 \; = \; -2r+\sum_{P \in \psi^{-1}(0)} (e_\psi(P)-1)+\sum_{P \in \psi^{-1}(\infty)} (e_\psi(P)-1);
\]
here $e_\psi(P)$ is the ramification degree of $P$ in $\psi$.
However,
\[
\sum_{P \in \psi^{-1}(0)} e_\psi(P) \; = \; \sum_{P \in \psi^{-1}(\infty)} e_\psi(P)
\; = \; r.
\]
Thus
\[
2\genus(C) \, -\, 2 \; =\; -\# \psi^{-1}(0)-\# \psi^{-1}(\infty).
\]
It follows that $\genus(C)=0$, and $\# \psi^{-1}(0)=\# \psi^{-1}(\infty)=1$.
Let $P_0$ and $P_\infty$ be the unique points on $C$
above $0$ and $\infty$ respectively. As $\psi$ is defined over $K$,
we have $P_0$, $P_\infty \in C(K)$. Thus $C$ is isomorphic to 
$\PP^1$ over $K$. Moreover, by composing this isomorphism with
a suitable automorphism of $\PP^1$, we obtain a $K$-isomorphism
$g : C \rightarrow \PP^1$ satisfying $g(P_0)=0$ and $g(P_\infty)=\infty$.
We let $h=\psi \circ g^{-1}$, giving the commutative diagram 
\[
\begin{tikzcd}
         C \arrow[d, "g"] \arrow[rd, "\psi"] & \\
         \PP^1  \arrow[r, "h" below] & \PP^1
\end{tikzcd}
\]

We note that $h(0)=0$, $h(\infty)=\infty$, that $\deg(h)=r$,
and that $\BV(h) \subseteq \{0,\infty\}$. Moreover,
$e_h(0)=e_h(\infty)=r$. Thus, $h/x^r$ has no zeros or
poles on $\PP^1$ and is therefore a constant. 
Write $\delta=h/x^r \in K^\times$.
Then $\psi=h \circ g=\delta \cdot g^r$.
\end{proof}

\begin{lem}\label{lem:infintersect}
Let $\psi : C \rightarrow \PP^1$ be an irreducible cover defined
over $K$, of degree $r \ge 1$. Suppose $\OO_S^\times \cap \psi(C(K))$ is infinite.
Let $\gamma \in \OO_S^\times \cap \psi(C(K))$. Then there is
an isomorphism $g : C \rightarrow \PP^1$
such that $\psi=\gamma \cdot g^r$. Moreover,
\[
\OO_S^\times \cap \psi(C(K)) \; = \; \gamma \cdot (\OO_S^\times)^r.
\]
\end{lem}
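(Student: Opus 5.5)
The strategy is to use Siegel's Theorem~\ref{thm:Siegel} to force $C$ to be $\PP^1$ with $\psi$ branched only over $0$ and $\infty$, and then to apply Lemma~\ref{lem:twopoints}.

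First, $C$ is absolutely irreducible. Since $\psi(C(K))$ is infinite, $C(K)$ is nonempty. A smooth projective curve over $K$ with a $K$-point has $K$ algebraically closed in $K(C)$: the residue field of a $K$-rational point is $K$ and contains $K(C)\cap\overline{K}$. So $C$ is absolutely irreducible.

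Next, consider the function $f=\psi\in K(C)$ and also $f'=1/\psi$. Since $\psi$ has degree $r\ge 1$, it is non-constant. Every $P\in C(K)$ with $\psi(P)\in\OO_S^\times$ satisfies both $\psi(P)\in\OO_S$ and $1/\psi(P)\in\OO_S$, so there are infinitely many such points. Siegel's Theorem~\ref{thm:Siegel} applied to $\psi$ gives $\genus(C)=0$ and that $\psi$ has at most two poles, i.e. $\#\psi^{-1}(\infty)\le 2$. Applied to $1/\psi$, it gives $\#\psi^{-1}(0)\le 2$.

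To conclude $\BV(\psi)\subseteq\{0,\infty\}$, I would use the counting argument of Lemma~\ref{lem:twopoints} in reverse. Riemann--Hurwitz gives
\[
-2=-2r+\sum_{P}(e_\psi(P)-1)\ \ge\ -2r+\bigl(r-\#\psi^{-1}(0)\bigr)+\bigl(r-\#\psi^{-1}(\infty)\bigr),
\]
so $\#\psi^{-1}(0)+\#\psi^{-1}(\infty)\ge 2$, with equality exactly when there is no ramification outside $0,\infty$. The Siegel bound of at most two points over each gives no contradiction by itself, so this is where the real work lies. The finer input I would use is the $S$-integral version of Siegel (Hindry--Silverman D.9.1): infinitely many $S$-integral points on $C\setminus\psi^{-1}\{0,\infty\}$ with $C\cong\PP^1$ force $\#\psi^{-1}(\{0,\infty\})\le 2$, since $\PP^1$ minus at least three points is hyperbolic. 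The stated form of Theorem~\ref{thm:Siegel} only bounds poles of a single function. To reduce to it, I would pick a rational function $u$ on $C\cong\PP^1$ whose poles are three of the points of $\psi^{-1}(\{0,\infty\})$. Then $u$ takes $S'$-integral values at our infinitely many points for a suitably enlarged $S'$, because $\psi(P)$ is an $S$-unit and $u$ is a polynomial in $\psi$ and $1/\psi$ locally. Making this precise is the step I expect to be the main obstacle. With $\#\psi^{-1}(\{0,\infty\})\le 2$ in hand, Riemann--Hurwitz forces equality, so $\BV(\psi)\subseteq\{0,\infty\}$.

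By Lemma~\ref{lem:twopoints}, $\psi=\delta\cdot g^r$ with $\delta\in K^\times$ and $g:C\to\PP^1$ an isomorphism over $K$. Since $\gamma=\psi(Q)$ for some $Q\in C(K)$, we get $\gamma=\delta g(Q)^r$ with $g(Q)\in K^\times$. Replacing $g$ by $g/g(Q)$ gives $\psi=\gamma\cdot g^r$. Since $g$ is bijective on $K$-points, $\psi(C(K))\setminus\{0,\infty\}=\gamma\cdot(K^\times)^r$. Intersecting with $\OO_S^\times$ and using $\gamma\in\OO_S^\times$, we get $\gamma t^r\in\OO_S^\times$ if and only if $t^r\in\OO_S^\times$. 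This holds if and only if $t\in\OO_S^\times$, because valuations of $t^r$ are $r$ times those of $t$. This yields $\OO_S^\times\cap\psi(C(K))=\gamma\cdot(\OO_S^\times)^r$.
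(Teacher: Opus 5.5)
\textbf{There is a gap at exactly the step you flag as the main obstacle.} You never establish $\#\psi^{-1}(\{0,\infty\})\le 2$, and the sketch you give for it does not work as written.

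Choosing $u$ with poles at ``three of the points'' of $\psi^{-1}(\{0,\infty\})$ causes two problems. First, those points need not be $K$-rational or form a Galois-stable set, so $u$ need not lie in $K(C)$, as Theorem~\ref{thm:Siegel} requires. This could be repaired by a base extension. Second, the justification that $u$ is ``locally a polynomial in $\psi$ and $1/\psi$'' does not hold. For $r\ge 2$, a function regular on $C\setminus\psi^{-1}(\{0,\infty\})$ is in general only \emph{integral} over $K[\psi,\psi^{-1}]$, and usually not even in $K(\psi)$. What you would actually need is a monic equation for $u$ over $\OO_{S'}[\psi,\psi^{-1}]$, followed by the integral closedness of $\OO_{S'}$. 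As it stands, the decisive inequality is asserted, not proved.

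\textbf{The missing idea is much simpler:} apply Siegel's theorem once, to $f=\psi+\psi^{-1}\in K(C)$.
\begin{itemize}
\item If $\psi(P)\in\OO_S^\times$ then $f(P)\in\OO_S$, so $f$ is $S$-integral at infinitely many $K$-points.
\item $f$ has a pole at every zero of $\psi$ (coming from $\psi^{-1}$) and at every pole of $\psi$ (coming from $\psi$). So the poles of $f$ are exactly $\psi^{-1}(\{0,\infty\})$.
\item Theorem~\ref{thm:Siegel} then gives $\genus(C)=0$ and $\#\psi^{-1}(0)+\#\psi^{-1}(\infty)\le 2$ immediately. Hence $\psi$ has exactly one zero and one pole.
\item Your Riemann--Hurwitz equality argument then yields $\BV(\psi)\subseteq\{0,\infty\}$.
\end{itemize}
This $f$ is the natural instance of your $u$: a $K$-rational function, regular off $\psi^{-1}(\{0,\infty\})$, with poles at \emph{all} of those points, so no choice of three points is needed. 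This is what the paper does.

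With that replacement, the rest of your argument is correct and essentially matches the paper's proof:
\begin{itemize}
\item absolute irreducibility of $C$ from $C(K)\neq\emptyset$;
\item the appeal to Lemma~\ref{lem:twopoints};
\item rescaling $g$ by $g(Q)$ to get $\psi=\gamma\cdot g^r$;
\item the valuation argument showing $\gamma t^r\in\OO_S^\times$ if and only if $t\in\OO_S^\times$.
\end{itemize}
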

\begin{proof}
If $C$ is absolutely reducible then $C(K)=\emptyset$
giving a contradiction. Thus we may suppose that $C$ is absolutely 
irreducible. 

We think of $\psi$ as a non-constant function on $C$.
Since $\OO_S^\times \cap \psi(C(K))$ is infinite,
we see that
\[
\{ P \in C(K) \; : \; \psi(P) \in \OO_S^\times \}
\]
is infinite.
Let $f=\psi+\psi^{-1}$. We note that
\[
\{ P \in C(K) \; : \; f(P) \in \OO_S \} \; \supseteq
\{P \in C(K) \; : \; \psi(P) \in \OO_S^\times\}.
\]
Thus, by Siegel's Theorem,
the curve $C$ has genus $0$,
and the function $f$ has at most two poles. However, $f$
has poles at both the zeros and poles of $\psi$.
We conclude that $\psi$ has precisely one zero,
say $P_0$,
and precisely one pole, say $P_\infty$. In particular,
$e_\psi(P_0)=e_\psi(P_\infty)=r$. It follows from
the Riemann--Hurwitz Theorem that $\psi$ does not have any
branch values other that $0$ and $\infty$, so 
$\BV(\psi)=\{0,\infty\}$.

We now apply Lemma~\ref{lem:twopoints} to conclude that
$\psi=\delta \cdot g^r$ where 
	$g: C \rightarrow \PP^1$ is an isomorphism, and
	$\delta \in K^\times$.
Observe that
\[
\psi(C(K))=\delta \cdot (K^\times)^r \cup \{0,\infty\}.
\]
By assumption, the intersection 
\[
\OO_S^\times \cap \psi(C(K)) \; = \; \OO_S^\times \cap \left(
\delta \cdot (K^\times)^r \right)
\]
is infinite; 
let $\gamma$
be any element of this intersection. Then 
$\delta \cdot (K^\times)^r=\gamma \cdot (K^\times)^r$.
Thus,
$\OO_S^\times \cap \psi(C(K))=\gamma \cdot (\OO_S^\times)^r$ 
as required. Moreover, $\delta=\gamma \cdot \alpha^r$
for some $\alpha \in K^\times$. Thus, 
$\psi=\delta \cdot g^r=\gamma \cdot (\alpha g)^r$. Replacing $g$
by $\alpha g$ completes the proof.
\end{proof}

\begin{proof}[Proof of Theorem~\ref{thm:sunitHilbert}]
Let $\fH$ be as in Theorem~\ref{thm:Hilbert}.  
We divide $\fH$ into two subsets $\fH_1$, $\fH_2$,
where $\fH_1$ consists of those $H \in \fH$ 
such that the intersection 
\begin{equation}\label{eqn:intersect}
\OO_S^\times \cap \varphi_H(X_H(K))
\end{equation}
is finite, and $\fH_2$ is the remainder. 
Write $\fH_2=\{H_1,\dotsc,H_k\}$. Then, by Lemma~\ref{lem:infintersect},
for $i=1,2,\dots,k$, there is some $r_i \ge 1$ and $\gamma_i \in \OO_S^\times$
such that
\[
	\OO_S^\times \cap \varphi_{H_i}(X_{H_i}(K)) \; = \; \gamma_i \cdot (\OO_S^\times)^{r_i}.
\]
We note that $r_i=\deg(\psi_{H_i})=[\MM^{H_i}:\KK]=[G:H_i] \ge 2$,
since $H_i$ is intransitive and $G$ is transitive.
Thus
\begin{equation}\label{eqn:intersectunion}
 \OO_S^\times \cap \bigcup_{H \in \fH} \varphi_H(X_H(K)) \; = \;
I \cup \bigcup_{i=1}^k \gamma_i \cdot (\OO_S^\times)^{r_i},
\end{equation}
where $I$ is some finite set. The theorem now follows from
Theorem~\ref{thm:Hilbert}.
\end{proof}

\section{Proof of Theorem~\ref{thm:general}}\label{sec:general}
Before launching into the proof of Theorem~\ref{thm:general}
we shall need two lemmas.
The first is a theorem due to Capelli \cite{Capelli}.

\begin{lem}[Capelli]\label{lem:Lang}
Let $K$ be a field. Let $m \ge 2$ and $a \in K^\times$.
Suppose $x^m-a$ is reducible in $K[x]$. Then,
there is some $b \in K^\times$ such that,
\begin{itemize}
	\item either $4 \mid m$ and $a=-4b^4$;
	\item or $a=b^\ell$ for some prime $\ell \mid m$.
\end{itemize}
\end{lem}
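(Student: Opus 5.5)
The plan is to prove Capelli's theorem by strong induction on $m$, writing $m = p\,k$ with $p$ a prime divisor of $m$. Two classical facts will carry the argument.

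\textbf{(A) The prime case.} For a prime $p$, if $a$ is not a $p$-th power in $K$, then $x^p-a$ is irreducible. To see this, let $\alpha$ be a root in $\overline{K}$. If $x^p-a$ factors, some factor $g$ has degree $d$ with $0<d<p$. The constant term of $g$ is $\pm$ a product of $d$ roots $\zeta\alpha$, so $c=\zeta'\alpha^d\in K$ for some $p$-th root of unity $\zeta'$ (interpret roots of unity appropriately in characteristic $p$, where $x^p-a=(x-\alpha)^p$). Then $c^p=a^d$. Since $\gcd(d,p)=1$, choose $u,v$ with $ud+vp=1$; this gives $a=(c^u a^v)^p$, a contradiction.

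\textbf{(B) The tower step.} Let $\alpha^k$ be a root of $y^p-a$ with $[K(\alpha^k):K]=p$, and let $\beta=\alpha^k$. If $x^k-\beta$ is irreducible over $K(\beta)$, then $x^m-a$ is irreducible over $K$. This is just multiplicativity of degrees: $[K(\alpha):K]=[K(\alpha):K(\beta)]\,[K(\beta):K]=k\cdot p=m$.

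\textbf{The main step.} Suppose $x^m-a$ is reducible and $a$ is not an $\ell$-th power for any prime $\ell\mid m$. By (A) applied with $p$, the extension $L=K(\beta)$, where $\beta^p=a$, has degree $p$. By (B), $x^k-\beta$ is reducible over $L$. By induction (note $k<m$; if $k=1$ this is already a contradiction), either $\beta=c^q$ for some prime $q\mid k$ and $c\in L$, or $4\mid k$ and $\beta=-4c^4$.

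\textbf{Descending from $L$ to $K$.} This is the main obstacle. Apply the norm $N_{L/K}$. The conjugates of $\beta$ are $\zeta\beta$, so $N(\beta)=\pm a$; more precisely $N(\beta)=(-1)^{p+1}a$.

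In the case $\beta=c^q$, we get $(-1)^{p+1}a=N(c)^q$. If $p$ is odd, or if $q$ is odd (absorb the sign into the $q$-th power), then $a$ is a $q$-th power with $q\mid m$, a contradiction. The remaining subcase is $p=q=2$: then $-a=N(c)^2$, so $a=-b^2$, and we have $4\mid m$. Here I would argue directly. Since $a$ is not a square, $-1$ is not a square and $L=K(\sqrt a)=K(i)$ with $b\in K$. So $\beta=ib$ (after rescaling) is a square in $K(i)$, say $ib=(s+ti)^2$. This forces $s^2=t^2$ and $b=2st=\pm 2s^2$, whence $a=-b^2=-4s^4$. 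That is the first alternative.

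In the case $\beta=-4c^4$ with $4\mid k$, we have $N(\beta)=(-4)^pN(c)^4$. For $p$ odd, $\pm a$ equals $(-4)^p$ times a fourth power, so $a=-4\cdot(\text{fourth power})\cdot(-4)^{p-1}$ and $(-4)^{p-1}=16^{(p-1)/2}$ is a fourth power; this is the first alternative. For $p=2$, we get $-a=16N(c)^4$, so $a$ is minus a square, and we handle it as in the subcase above.

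Throughout I would treat characteristic $p$ carefully. In characteristic $p$ with $p\mid m$, the map $x\mapsto x^p$ means that if $a$ is not a $p$-th power then $x^p-a$ is purely inseparable and irreducible, and the norm computations still apply.

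Which step is hardest: the descent in the case $p=2$, where signs interact with $-4$. I would handle it by the explicit computation in $K(i)$ described above.
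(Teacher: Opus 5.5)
Your proof is correct. The paper gives no argument of its own: it simply cites Theorem~VI.9.1 of Lang's \emph{Algebra}.

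\textbf{Comparison with the cited proof.} Lang first reduces to the case where $m$ is a prime power. He then inducts on the exponent $r$ in $m=p^r$, using the tower $K\subset K(a^{1/p})\subset K(a^{1/p^r})$, the norm from the degree-$p$ bottom field, and the same $K(\sqrt{-1})$ computation when $p=2$.

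You instead split off a single prime $p\mid m$ at the bottom of the tower. You then apply the full two-alternative statement for $k=m/p$ over $L=K(a^{1/p})$. This means your induction must range over all fields simultaneously, which is harmless and which you use implicitly.

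Because the norm from a degree-$p$ extension only contributes the sign $(-1)^{p+1}$, every inductive outcome descends to $K$ immediately except two: the case $p=q=2$, and the case $p=2$ with $\beta=-4c^4$. As a result, no preliminary reduction to prime powers is needed. Your route makes the induction more uniform; the paper's citation buys brevity.

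\textbf{One step to write out.} In the case $p=2$, $\beta=-4c^4$, you say you handle it ``as in the subcase above''. Reusing the $K(i)$ computation requires $\beta$ to be a square in $L$. This holds because $-4c^4=(2ic^2)^2$, where $i=\beta/b\in L$ and $b=4N(c)^2$. The computation then gives $a=-4s^4$ with $4\mid k\mid m$.
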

\begin{proof}
For a proof see Theorem VI.9.1 of Lang's Algebra \cite{Lang}.
\end{proof}

\begin{lem}\label{lem:Galois}
Let $\KK$ be a function field and let $\MM/\KK$ be a finite Galois extension with Galois
group $G$. Let $H$ be a subgroup of $G$ and let $\LL=\MM^H$. Let $\gamma$ be a place of $\KK$,
and let $P$ be a place of $\MM$ above $\gamma$.
Write $D=D(P/\gamma)$ for the decomposition
group. Suppose there is only one place
$Q$ of $\LL$ above $\gamma$. Then
$G=D H=HD$.
\end{lem}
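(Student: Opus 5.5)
The plan is to use the standard fact that the places of $\LL=\MM^H$ above $\gamma$ correspond bijectively to the double cosets $H\backslash G/D$. Once that is in place, uniqueness of $Q$ says there is exactly one double coset, i.e.\ $G=HD$. Taking inverses then gives $G=G^{-1}=D^{-1}H^{-1}=DH$, so both equalities follow at once.

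First, $G$ acts transitively on the set of places of $\MM$ above $\gamma$; this is the standard transitivity of the Galois action on primes lying over a given prime (Dedekind extension / valuation ring version, e.g.\ Serre, \emph{Local Fields}, Ch.~I, Prop.~19). So every place above $\gamma$ has the form $\sigma P$ for some $\sigma\in G$. Moreover, $\sigma P=\tau P$ if and only if $\tau^{-1}\sigma\in D$, so these places are in bijection with the cosets $G/D$ via $\sigma D\mapsto \sigma P$.

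Next, two places $\sigma P$ and $\tau P$ of $\MM$ restrict to the same place of $\LL$ if and only if $\tau P=h\sigma P$ for some $h\in H$. This holds because $H=\Gal(\MM/\LL)$ acts transitively on the places of $\MM$ above a fixed place of $\LL$, again by the transitivity fact applied to the Galois extension $\MM/\LL$. Every place of $\LL$ above $\gamma$ is the restriction of some place of $\MM$ above $\gamma$. Hence the places of $\LL$ above $\gamma$ correspond to the $H$-orbits on $G/D$, that is, to the double cosets $H\sigma D$.

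Finally, by hypothesis there is only one such place $Q$, so there is a single double coset, which must be $HD$ since it contains $1$. Thus $G=HD$, and inversion gives $G=DH$.

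The only point needing care is the transitivity of the Galois action on places in the function field setting. It is standard, and I will cite it rather than reprove it. Everything else is formal group-theoretic bookkeeping.
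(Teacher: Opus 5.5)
Your proposal is correct and is essentially the paper's argument. The paper takes $\sigma\in G$, notes that $P$ and $\sigma(P)$ both lie over the unique place $Q$, and uses transitivity of $H=\Gal(\MM/\LL)$ on the places over $Q$ to find $\tau\in H$ with $\sigma(P)=\tau(P)$; hence $\sigma\in\tau D\subseteq HD$, and inverting gives $G=DH$. Your double-coset bookkeeping packages exactly this step, although the transitivity of $G$ on places over $\gamma$ that you invoke is not actually needed for this conclusion.
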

\begin{proof}
Let $\sigma \in G$, and consider $\sigma(P)$.
This is a place of $\MM$ above $\gamma$. Since $Q$  is
the only place of $\LL$ above $\gamma$, we conclude that
both $P$ and
$\sigma(P)$ are above $Q$. 
Thus there is some $\tau \in H
=\Gal(\MM/\LL)$ such that $\sigma(P)=\tau(P)$.
Thus $\tau^{-1} \sigma \in D$. Hence
$\sigma \in \tau D \subseteq HD$.
We conclude that $G=HD$. Finally,
	\[
		G \; =\; G^{-1} \; =\; D^{-1}H^{-1} \; = \; DH.
	\]
\end{proof}

We now turn to the proof of 
Theorem~\ref{thm:general}. We have already
shown that (II) implies (I), and it remains
to show that (I) implies (II).
Let $\pi : X \rightarrow \PP^1$ be an absolutely 
irreducible cover defined over a number field $K$,
and let $\Theta$ be a finitely generated subgroup
of $K^\times$. Suppose $\Theta \cap \Red(\pi)$
is infinite. We would like to show the existence of
		some $\gamma \in \Theta$ and some $\mu \in K(X)$
such that
\[
			\pi \; = \; -4 \cdot \gamma \cdot \mu^4 \qquad \text{or} \qquad \gamma \cdot \mu^\ell \quad \text{(where $\ell$ is prime)}.
		\]

\begin{figure}
\begin{tikzcd}
	& \MM &  \\
	\MM^R=\LL=K(X) \arrow[ru] &   & \MM^{N}=\KK(\zeta_r,(t/\gamma)^{1/r}) \arrow[lu]  \\
	& \MM^{N^\prime}=K(Y) \arrow[ul, "\psi^*"] \arrow[ur]  & \MM^H=K(X_H)=\KK((t/\gamma)^{1/r}) \arrow[u]\\
	 & \MM^G=\KK=K(\PP^1)=K(t) \arrow[u, "\eta^*"] \arrow[ru, "\varphi_H^*"] 
	 \arrow[luu,"\pi^*"] & & \\
\end{tikzcd}
	\caption{
	The morphism $\pi : X \rightarrow \PP^1$
	induces an extesion $\LL/\KK$
	of function fields.
	The extension $\MM/\KK$ is the Galois
	closure of $\LL/\KK$ and its  
	Galois group is denoted by $G$. 
        In the course of the proof of
	Theorem~\ref{thm:general},
subgroups $R$, $H$, $N$, $N^\prime$
	of $G$ are constructed.
The diagram shows the relation
	between the corresponding
	fixed fields of these subgroups. }
	\label{figure:1}
\end{figure}

Write $\KK=K(t)$ for the function field of $\PP^1$,
and let $\LL=K(X)$ be the function field of $X$.
By the primitive element theorem, $\LL=\KK(\theta)$ where
$\theta$ is the root of some irreducible polynomial $F \in
\KK[x]$ of degree
$n:=[\LL:\KK]=\deg(\pi)$. Write $\MM/\KK$ for the Galois
closure of $\LL/\KK$. Then $\MM=\KK(\theta_1,\dotsc,\theta_n)$
where $\theta_1,\dotsc,\theta_n$ are distinct roots of $F$,
and $\theta_1=\theta$. Let $G=\Gal(\MM/\KK)$ which we think
of as a transitive subgroup of
$\Sym(\theta_1,\dotsc,\theta_n)$.  We consider the stabilizer
of $\theta_1$ in $G$: 
\begin{equation}\label{eqn:stab}
R:=\Stab(\theta_1)=\{\sigma \in G \; : \; \sigma(\theta_1)=\theta_1\}.
\end{equation}
We note that $\MM^R=\LL$. The reader might
find it helpful to consult Figure~\ref{figure:1}
from time to time to keep track of the 
various subfields of $\MM$ we shall construct.

We shall need to make judicious choices for 
both a degree $1$ place $\gamma$ of $\KK$,
and a place $P$ of $\MM$ above $\gamma$.
As $\Theta$ is finitely generated, there is a finite
set of non-archimedean places $S$ of $K$
such that $\Theta \subseteq \OO_S^\times$.
We now make use of the assumption that
$\Theta \cap \Red(\pi)$ is infinite.
It follows from Theorem~\ref{thm:Hilbert}
that there is an intransitive subgroup $H$
of $G$ such that $\Theta \cap \varphi_H(X_H(K))$
is infinite; here $X_H/K$ is an absolutely
irreducible curve with function field
$K(X_H)=\MM^H$, and $\varphi_H: X_H \rightarrow \PP^1$
is the morphism induced by the inclusion
$\KK \subseteq \MM^H$. Let $\gamma$ be an element
of $\Theta \cap \varphi_H(X_H(K)) \subseteq \PP^1(K)$
that is not a branch value for $\pi$;
this will be our choice of degree $1$ place on $\KK$.
By Lemma~\ref{lem:infintersect},
we have that $\varphi_H=\gamma \cdot g^r$ where $g: X_H \rightarrow \PP^1$
is an isomorphism. We will use $g$ to identify $X_H=\PP^1$. 
For now, we note that $1 \in X_H(K)$
maps to $\gamma$ in $\PP^1$. We choose $P$ to be a place of $\MM$
that is above the degree $1$ place corresponding to 
$1 \in X_H(K)$. By Lemma~\ref{lem:degree1}, we have
\begin{equation}\label{eqn:inclusion}
	D(P/\gamma) \subseteq H.
\end{equation}

We note that
\[
\MM^H=K(X_H)=\KK((t/\gamma)^{1/r}).
\]
However $\MM/\KK$ is Galois. It therefore contains the normal closure of 
$\KK((t/\gamma)^{1/r})/\KK$, which is $\KK(\zeta_r,(t/\gamma)^{1/r})$,
where $\zeta_r$ is a primitive $r$-th root of unity. Thus there is
a normal subgroup $N$ of $G$ such that
\begin{equation}\label{eqn:constant}
\MM^N=\KK(\zeta_r,(t/\gamma)^{1/r}).
\end{equation}
Since $\MM^N \supseteq \MM^H$ we have $N \subseteq H$. 

Let $N^\prime=N R$, where $R$ is given by
\eqref{eqn:stab}. As $N$ is normal, $N^\prime$ is a subgroup of $G$. 
We note that $R \subseteq N^\prime$ and $N \subseteq N^\prime$.
Thus,
\[
	\LL=\MM^R \supseteq \MM^{N^\prime}, \qquad \MM^N \supseteq \MM^{N^\prime}.
\]
In particular, $\MM^{N^\prime}$ is a 
subextension of $\LL/\KK$, and therefore 
\[
K \subseteq \MM^{N^\prime} \cap \overline{K} \subseteq \LL \cap \overline{K}
=K
\]
since $\LL=K(X)$ and $X$ is absolutely irreducible. Thus $\MM^{N^\prime}$
is the function field $K(Y)$ of some absolutely irreducible curve $Y/K$.
Moreover, $\pi \; : \; X \rightarrow \PP^1$ factors as a composition
of $K$-morphisms (say)

\begin{center}
\begin{tikzcd}
	X \arrow[r,"\psi"] 
	 \arrow[rr,out=30, in=150, "\pi"] 
	& Y \arrow[r,"\eta"] & 
	\PP^1.
\end{tikzcd}
\end{center}

Observe that the extension 
$\KK((t/\gamma)^{1/r})/\KK$ is unramified away
from the places of $\KK$ corresponding to $0$, $\infty \in \PP^1(K)$.
From \eqref{eqn:constant}, we see that
$\MM^N/\KK((t/\gamma)^{1/r})$
is a constant field extension and therefore unramified 
\cite[Theorem 3.6.3]{Stichtenoth}. As $K(Y)/\KK$
is a subextension of $\MM^N/\KK$,
the morphism $\eta : Y \rightarrow \PP^1$ is unbranched
away from $0$, $\infty$. By Lemma~\ref{lem:twopoints}
we may suppose $Y=\PP^1$ and the map $\eta : Y \rightarrow \PP^1$
is given by $\eta(z)=\delta \cdot z^m$, for some $\delta \in K^\times$,
where 
\[
m \; =\; \deg(\eta)\; =\; 
[\MM^{N^\prime}:\KK] \; =\; [G:N^\prime].
\]
We claim that $m \ge 2$; for this it is enough
to show that $N^\prime$ is a proper subgroup of $G$.
Here we make use of the identification of $G$
as a transitive subgroup of $\Sym(\theta_1,\dotsc,\theta_n)$.
Consider the orbit of $\theta_1$ under the action of $N^\prime$:
\[
N^\prime \cdot \theta_1 \; = \; N R \cdot \theta_1 \; = \; N \cdot \theta_1 \; 
\subseteq \; H \cdot \theta_1,
\]
since $R=\Stab(\theta_1)$ and $N \subseteq H$. Since $H$ is intransitive,
we conclude that $N^\prime$ is intransitive, and therefore $N^\prime$
is a proper subgroup of $G$, establishing our claim that $m \ge 2$.

We will apply Lemma~\ref{lem:Galois}
to show that the fibre $\eta^{-1}(\gamma)$ is reducible.
Suppose otherwise. Then there is precisely
one place of $K(Y)=\MM^{N^\prime}$ above $\gamma$.
Thus, by the lemma, $G=D N^\prime $ where
$D=D(P/\gamma)$ is the decomposition group. However,
$D \subseteq H$ by \eqref{eqn:inclusion}. Thus 
\[
	G\; =\; H N^\prime \; =\; H N \Stab(\theta_1)\; =\; H \Stab(\theta_1),
\]
since $N^\prime=NR$, $R=\Stab(\theta_1)$ and $H \supseteq N$.
This is a contradiction since 
\[
	H \Stab(\theta_1) \cdot \theta_1 \; =\; H \cdot \theta_1
\]
and $H$ acts intrasitively on $\{\theta_1,\dotsc,\theta_n\}$.
We conclude that $\eta^{-1}(\gamma)$ is reducible.
However, since $\eta(z)=\delta \cdot z^m$ we deduce 
that the polynomial $x^m-\gamma/\delta$ is reducible
in $K$. By Lemma~\ref{lem:Lang}, either $4\mid m$
and $\delta/\gamma=-4 b^4$ or there is
some prime $\ell \mid m$
and $\delta/\gamma = b^\ell$.
We recall that
$\pi=\eta \circ \psi=\delta \cdot \psi^m$,
where $\psi : X \rightarrow Y$. 
Thus $\pi = - 4 \gamma \mu^4$ or $\pi=\gamma \mu^{\ell}$
where $\mu=b \psi^{m/4}$ or $\mu=b \psi^{m/\ell}$ 
respectively. This completes the proof
of Theorem~\ref{thm:general}.

\bibliographystyle{abbrv}
\bibliography{bib}

\begin{thebibliography}{10}

\bibitem{Capelli}
A.~Capelli.
\newblock Sulla riduttibilit\`a{} della funzione {$x^n-A$} in un campo
  qualunque di razionalit\`a.
\newblock {\em Math. Ann.}, 54(4):602--603, 1901.

\bibitem{HindrySilverman}
M.~Hindry and J.~H. Silverman.
\newblock {\em Diophantine geometry}, volume 201 of {\em Graduate Texts in
  Mathematics}.
\newblock Springer-Verlag, New York, 2000.
\newblock An introduction.

\bibitem{KoenigNeftin}
J.~K\"onig and D.~Neftin.
\newblock Reducible fibers of polynomial maps.
\newblock {\em Int. Math. Res. Not. IMRN}, (6):5373--5402, 2024.

\bibitem{Lang}
S.~Lang.
\newblock {\em Algebra}, volume 211 of {\em Graduate Texts in Mathematics}.
\newblock Springer-Verlag, New York, third edition, 2002.

\bibitem{Polya}
G.~P\'olya.
\newblock Zur arithmetischen {U}ntersuchung der {P}olynome.
\newblock {\em Math. Z.}, 1(2-3):143--148, 1918.

\bibitem{SerreLocalFields}
J.-P. Serre.
\newblock {\em Local fields}, volume~67 of {\em Graduate Texts in Mathematics}.
\newblock Springer-Verlag, New York-Berlin, 1979.
\newblock Translated from the French by Marvin Jay Greenberg.

\bibitem{SerreMW}
J.-P. Serre.
\newblock {\em Lectures on the {M}ordell-{W}eil theorem}.
\newblock Aspects of Mathematics, E15. Friedr. Vieweg \& Sohn, Braunschweig,
  1989.
\newblock Translated from the French and edited by Martin Brown from notes by
  Michel Waldschmidt.

\bibitem{SerreTopics}
J.-P. Serre.
\newblock {\em Topics in {G}alois theory}, volume~1 of {\em Research Notes in
  Mathematics}.
\newblock A K Peters, Ltd., Wellesley, MA, second edition, 2008.
\newblock With notes by Henri Darmon.

\bibitem{ShoreyTijdeman76}
T.~N. Shorey and R.~Tijdeman.
\newblock On the greatest prime factors of polynomials at integer points.
\newblock {\em Compositio Math.}, 33(2):187--195, 1976.

\bibitem{Siegel}
C.~Siegel.
\newblock Approximation algebraischer {Z}ahlen.
\newblock {\em Math. Z.}, 10(3-4):173--213, 1921.

\bibitem{Stichtenoth}
H.~Stichtenoth.
\newblock {\em Algebraic function fields and codes}, volume 254 of {\em
  Graduate Texts in Mathematics}.
\newblock Springer-Verlag, Berlin, second edition, 2009.

\end{thebibliography}
\end{document}